\newcommand{\eps}{\varepsilon}
\newcommand{\N}{\mathbb{N}}
\newcommand{\R}{\mathbb{R}}
\newcommand{\Q}{\mathbb{Q}}
\newcommand{\Ac}{\mathcal{A}}
\newcommand{\Dc}{\mathcal{D}}
\newcommand{\Fc}{\mathcal{F}}
\newcommand{\Hc}{\mathcal{H}}
\newcommand{\Db}{\mathbb{D}}
\newcommand{\Gb}{\mathbb{G}}
\newcommand{\G}{\mathbb{G}}
\newcommand{\X}{\mathcal{X}}
\newcommand{\W}{\mathcal{W}}
\newcommand{\Ex}{\mathbb{E}}
\newcommand{\Exp}{\mathbb{E}}
\newcommand{\Var}{\mathrm{Var}}
\newcommand{\1}{\mathbf{1}}
\renewcommand{\Pr}{\mathbb{P}}
\newcommand{\BL}{\mathrm{BL}}
\newcommand{\p}{\overset{\Pr}{\to}}
\newcommand{\op}{\overset{\Pr^*}{\to}}
\newcommand{\as}{\overset{\mathrm{a.s.}}{\longrightarrow}}
\newcommand{\scs}{\scriptscriptstyle}
\newcommand{\ex}[1]{\text{\hspace{-5pt}\raisebox{3pt}{$\scs #1$}}}
\theoremstyle{plain}
\newtheorem{prop}{Proposition}[section]
\newtheorem{cor}[prop]{Corollary}
\newtheorem{lem}[prop]{Lemma}
\newtheorem{cond}[prop]{Condition}
\newenvironment{customlem}[1]
  {\innercustomlem}
  {\endinnercustomlem}
\theoremstyle{remark}
\numberwithin{equation}{section}
\title{A note on conditional versus joint unconditional weak convergence in bootstrap consistency results}
\author{Axel B\"ucher\,\footnote{Ruhr-Universit\"at Bochum,
Fakult\"at f\"ur Mathematik, 
Universit\"atsstr.~150, 44780 Bochum, Germany. 
{E-mail:} \texttt{axel.buecher@rub.de}}\;~ and Ivan Kojadinovic\,\footnote{CNRS / Universit\'e de Pau et des Pays de l'Adour, Laboratoire de math\'ematiques et applications -- IPRA, UMR 5142, B.P. 1155, 64013 Pau Cedex, France.
{E-mail:} \texttt{ivan.kojadinovic@univ-pau.fr}}
}
\begin{document}
\maketitle

\begin{abstract}
The consistency of a bootstrap or resampling scheme is classically validated by weak convergence of conditional laws. However, when working with stochastic processes in the space of bounded functions and their weak convergence in the Hoffmann-J\o rgensen sense, an obstacle occurs: due to possible non-measurability, neither laws nor conditional laws are well-defined. Starting from an equivalent formulation of weak convergence based on the bounded Lipschitz metric, a classical circumvent is to formulate bootstrap consistency in terms of the latter distance between what might be called a \emph{conditional law} of the (non-measurable) bootstrap process and the law of the limiting process. The main contribution of this note is to provide an equivalent formulation of bootstrap consistency in the space of bounded functions which is more intuitive and easy to work with. Essentially, the equivalent formulation consists of (unconditional) weak convergence of the original process jointly with two bootstrap replicates. As a by-product, we provide two equivalent formulations of bootstrap consistency for statistics taking values in separable metric spaces: the first in terms of (unconditional) weak convergence of the statistic jointly with its bootstrap replicates, the second in terms of convergence in probability of the empirical distribution function of the bootstrap replicates. Finally, the asymptotic validity of bootstrap-based confidence intervals and tests is briefly revisited, with particular emphasis on the, in practice unavoidable, Monte Carlo approximation of conditional quantiles.

\medskip

\noindent {\it Keywords:} 
Bootstrap; conditional weak convergence; confidence intervals; resampling; stochastic processes; weak convergence.

\medskip

\noindent {\it MSC 2010:} 
62E20; 
62G09 
\end{abstract}

\section{Introduction}

It is not uncommon in statistical problems that the limiting distribution of a statistic of interest be intractable. To carry out inference on the underlying quantity, one possibility consists of using a \emph{bootstrap} or \emph{resampling scheme}. Ideally, prior to its use, its \emph{consistency} or \emph{asymptotic validity} should be mathematically demonstrated. For a real or vector-valued statistic~$\bm S_n$ (or, more generally, a statistic taking values in a separable metric space $\Db$), the latter classically consists of establishing weak convergence of certain conditional laws (assuming that these are well-defined; see, e.g., \citealp{Fad85} or Section~6 in \citealp{Kal02}). Specifically, a resampling scheme can be considered asymptotically consistent if an appropriate distance between the conditional distribution of a \emph{bootstrap replicate} of $\bm S_n$ \emph{given} the available observations and  the distribution of $\bm S_n$ is shown to converge to zero in probability; see, for instance, \cite{BicFre81},  \citet[Chapter~23]{Van98}, \cite{Hor01} and the references therein, or Assertions~$(c)$ and~$(e)$ in Lemma~\ref{lem:uncond:cond} below. A first contribution of this note is to show that, under minimal conditions, the aforementioned convergence of conditional laws is actually equivalent to the (unconditional) weak convergence of $\bm S_n$ jointly with two bootstrap replicates to independent copies of the same limit. As pointed out by a referee, the proof of this result relies on a key idea dating back to \cite{Hoe52}, which has been used to derive quite similar statements since then; see, for instance, Lemma~4.1 in \cite{DumZer13} and the additional references given in Section~\ref{sec:equiv:cond:uncond}. Furthermore, we provide an interesting third equivalent formulation of the consistency of a bootstrap for $\bm S_n$. It roughly states that the distance between the empirical distribution of the bootstrap replicates and the unobservable distribution of $\bm S_n$ converges in probability to zero as the number of replicates and the sample size increase (see also \citealp{BerLecMil87}, Section~4, for a similar result). The latter is particularly meaningful given that most applications of resampling involve at some point approximating the unobservable distribution of $\bm S_n$ by the empirical distribution of a finite number of bootstrap replicates. 

In many situations, the $\Db$-valued statistic of interest $\bm S_n$ is a ``sufficiently smooth'' functional of a certain stochastic process $\Gb_n$ belonging to the space $\ell^\infty(T)$ of bounded functions defined on some arbitrary  set $T$ (think of the general empirical process, for instance, as defined in Chapter~2 of \citealp{VanWel96}). Note that $\ell^\infty(T)$, when equipped with the supremum distance, is in general neither separable nor complete, and that $\Gb_n$ is usually allowed to be non-measurable as well. As a consequence, neither laws nor conditional laws are well-defined in general, which complicates the theoretical analysis of bootstraps for $\Gb_n$. Following \cite{GinZin90}, the consistency of a resampling scheme is then commonly defined by the requirement that the bounded Lipschitz distance between the candidate limiting law and a suitable adaptation of what might be called a conditional law of the bootstrap replicate (even though the latter does not exist in the classical sense due to non-measurability) converges to zero in outer probability. For instance, for the general empirical process based on independent and identically distributed observations, such an investigation is carried out in \cite{PraWel93} \citep[see also][Section~3.6]{VanWel96} for the so-called \emph{empirical bootstrap} and various other \emph{exchangeable bootstraps}. The appeal of working at the stochastic process level then arises from the fact that such bootstrap consistency results can be transferred to the $\Db$-valued statistic level (often, $\Db=\R^d$) by means of appropriate extensions of the continuous mapping theorem and the functional delta method. 

It may however be argued that the aforementioned generalization of the classical conditional formulation of  bootstrap consistency is unintuitive and complicated to use given the subtlety of the underlying mathematical concepts (in particular, relying on ``conditional laws'' of non-measurable maps). The latter seems all the more true for instance for empirical processes based on estimated or serially dependent observations \citep[see, e.g.,][]{RemSca09,Seg12,BucKoj16}. The main contribution of this note is to show that the $\Db$-valued results from Section~\ref{sec:equiv:cond:uncond} continue to hold for stochastic processes with bounded sample paths: the ``conditional'' formulation is actually equivalent to the (unconditional) weak convergence of the initial stochastic process jointly with two bootstrap replicates. From a practical perspective, using the latter unconditional formulation may have two important advantages. First and most importantly, it may be easier to prove in certain situations than the conditional formulation. For this reason, it was for instance used, as explained above, for empirical processes based on estimated or serially dependent observations; see also Section~\ref{sec:extension} below for additional references. Second, the unconditional formulation may be transferable to the statistic level for a slightly larger class of functionals of the stochastic process under consideration. The latter follows for instance from the fact that continuous mapping theorems \emph{for the bootstrap}, that is, adapted to the conditional formulation, require more than just continuity of the map that transforms the stochastic process into the statistic of interest \cite[see, e.g.,][Section~10.1.4]{Kos08}. Furthermore, there does not seem hitherto to exist an \emph{extended} continuous mapping theorem \citep[see, e.g.,][Theorem 1.11.1]{VanWel96} \emph{for the bootstrap}. Once the unconditional formulation is transferred to a separable metric space $\Db$ (with $\Db$ being typically  $\R^d$), the classical conditional statement immediately follows by the equivalence at the $\Db$-valued statistic level mentioned above. Finally, let us mention that the equivalence at the stochastic process level is well-known for the special case of \emph{multiplier central limit theorems (CLTs)} for the general empirical process based on i.i.d.\ observations using results of \citet[Section~2.9]{VanWel96} (note that multiplier CLTs are sometimes also referred to as multiplier or weighted bootstraps; see, e.g., \citealp{Kos08}, \cite{CheHua10} and the references therein). As such, our proven equivalence at the stochastic process level can be seen as an extension of the latter work.

As an illustration of our results, we revisit the fact that bootstrap consistency implies that bootstrap-based confidence intervals are asymptotically valid in terms of coverage and that bootstrap-based tests hold their level asymptotically; see, for instance, \citet[Lemma~23.3]{Van98} for a related result and \citet[Sections~3.3 and 3.4]{Hor01} for more specialized and deeper results. In particular, we provide results which explicitly take into account that (unobservable) conditional quantiles must be approximated by Monte Carlo in practice. 

Finally, we would like to stress that the asymptotic results in this note are all of first order. Higher order correctness of a resampling scheme (usually considered for real-valued statistics) may still be important in small samples. The reader is referred to \cite{Hal92} for more details. 

This note is organized as follows. The equivalence between the aforementioned formulations of asymptotic validity of bootstraps of statistics taking values in separable metric spaces is proved in Section~\ref{sec:equiv:cond:uncond}. Section~\ref{sec:extension} states conditions under which the results of Section~\ref{sec:equiv:cond:uncond} extend to stochastic processes with bounded sample paths. In Section~\ref{sec:conf:int:tests}, it is formally verified that, as expected, bootstrap consistency implies asymptotic validity of bootstrap-based confidence intervals and tests. A summary of results and concluding remarks are given in the last section. 

In the rest of the document, the arrow `$\leadsto$' denotes weak convergence, while the arrows `$\overset{\scs \mathrm{a.s.}}{\longrightarrow}$' and `$\overset{\scs\Pr}{\to}$' denote almost sure convergence and convergence in probability, respectively.

\section{Equivalent statements of bootstrap consistency in separable metric spaces}
\label{sec:equiv:cond:uncond}

The generic setup considered in this section is as follows. The available data will be denoted by~$\bm X_n$. Apart from measurability, no assumptions are made on $\bm X_n$, but it is instructive to think of $\bm X_n$ as an $n$-tuple of multivariate observations which may possibly be serially dependent. Let~$(\Db,d)$ denote a separable metric space. We are interested in approximating the law of some $\Db$-valued statistic computed from $\bm X_n$, denoted by $\bm S_n=\bm S_n(\bm X_n)$.  $\Db$-valued \emph{bootstrap replicates} of $\bm S_n$, on which inference could be based, will be denoted by $\bm S_{n}^{\scs (1)} = \bm S_{n}^{\scs (1)}(\bm X_n,\bm W_n^{\scs (1)})$, $\bm S_{n}^{\scs (2)} = \bm S_{n}^{\scs (2)}(\bm X_n,\bm W_n^{\scs (2)})$, \dots, where $\bm W_n^{\scs (1)}$, $\bm W_n^{\scs (2)}$, \dots, typically $\R$-valued, are identically distributed and represent additional sources of randomness such that $\bm S_{n}^{\scs (1)}, \bm S_{n}^{\scs (2)}, \dots$ are independent conditionally on $\bm X_n$.  

The previous setup is general enough to encompass most if not all types of resampling procedures. For instance, when $\Db = \R^d$, the classical \emph{empirical} (multinomial) bootstrap of \cite{Efr79} based on resampling with replacement from some original i.i.d.\ data set $\bm X_n=(X_1, \dots, X_n)$ can be obtained by letting the $\bm W_n^{\scs (i)}=(W_{n1}^{\scs (i)}, \dots, W_{nn}^{\scs (i)})$ be i.i.d.\ multinomially distributed with parameter $(n,1/n, \dots, 1/n)$. Indeed, for fixed $i\in\N$, the sample $\bm X_n^*=(X_1^*, \dots, X_n^*)$ constructed by including the $j$th original observation $X_{j}$ exactly $W_{nj}^{\scs (i)}$ times, $j \in \{1, \dots, n\}$, may be identified with a sample being drawn with replacement from the original observations. Many other resampling schemes are included as well: \emph{block} bootstraps for time series such as the one of \cite{Kun89}, (possibly dependent) \emph{multiplier} (or \emph{weighted}, \emph{wild}) bootstraps \citep[see, e.g.,][]{Sha10} or the \emph{parametric} bootstrap \citep[see, e.g.,][]{StuGonPre93,GenRem08}. For all but the last mentioned resampling scheme,  $\bm W_n^{\scs (1)}$, $\bm W_n^{\scs (2)}$, \dots, could be interpreted as i.i.d.\ vectors of \emph{bootstrap weights}, independent of $\bm X_n$. Several examples of such weights when $\bm X_n$ corresponds to $n$ i.i.d.\ observations are given for instance in \citet[Section~3.6.2]{VanWel96}. 

The previous setup is formally summarized in the following assumption. Recall the notions of conditional independence and regular conditional distribution; see, e.g., \cite{Kal02}, Section~6.

\begin{cond}[$\Db$-valued resampling mechanism] \label{cond:Db}
Let $(\Db,d)$ denote a separable metric space equipped with the Borel sigma field $\Dc$, and let $(\Omega, \Ac, \Pr)$ denote a probability space. For $n\in\N$, let $\bm X_n:\Omega \to \mathcal X_n$ be a random variable in some measurable space $\mathcal X_n$. Furthermore, let $\bm W_n^{\scs (i)}:\Omega \to \mathcal W_n$, $i\in\N$, denote identically distributed random variables in some measurable space $\mathcal W_n$ and let $\bm S_{n}^{\scs (i)} = \bm S_{n}^{\scs (i)}(\bm X_n,\bm W_n^{\scs (i)})$, $i\in\N$, be  $\Db$-valued statistics (to be considered as bootstrap replicates of some $\Db$-valued statistic $\bm S_n=\bm S_n(\bm X_n)$) that are independent conditionally on $\bm X_n$. Finally, assume that $\Pr(\bm S_n^{\scs (1)} \in \cdot \mid \bm X_n)$ has a regular version, denoted by $\Pr^{ \bm S_n\ex{(1)} \mid \bm X_n}:\mathcal X_n \times \Dc \to \R$ and called the (regular) conditional distribution of $\bm S_n^{\scs (1)}$ given $\bm X_n$.
\end{cond}

The last assumption in the previous condition concerning the existence of the conditional distribution of $\bm S_n^{\scs  (1)}$ given $\bm X_n$ is automatically satisfied if there exists a possibly different metric~$e$ on $\Db$ which is equivalent to $d$ such that $(\Db, e)$ is complete. In that case, $\Db$ is a Borel space, see Theorem~A1.2 in \cite{Kal02}, and the assertion follows from Theorem~6.3 in that reference. The existence of the aforementioned conditional distribution can also be guaranteed if the underlying probability space has a product structure, that is, if $\Omega=\Omega_0 \times \Omega_1 \times \cdots$ with probability measure $\Pr=\Pr_0 \otimes \Pr_1\otimes \cdots$, where $\Pr_i$ denotes the probability measure on~$\Omega_i$, such that, for any $\omega \in \Omega$, $\bm X_n(\omega)$ only depends on the first coordinate of $\omega$ and $\bm W_n^{\scs (i)}(\omega)$ only depends on the $(i+1)$-coordinate of~$\omega$, implying in particular that $\bm X_n, \bm W_n^{\scs (1)}, \bm W_n^{\scs (2)}, \dots$ are independent. In that case, it can readily be checked by Fubini's theorem that $(\bm x_n, A) \mapsto  \Pr_1(\bm S_{n}^{\scs (1)}(\bm x_n,\bm W_n^{\scs (1)}) \in A)$ defines a regular version of the conditional distribution of $\bm S_n^{\scs (1)}$ given $\bm X_n$.

In a related way, for arbitrary real-valued functions $h$ such that $\Ex|h(\bm S_n^{\scs (1)})|<\infty$, conditional expectations $\Ex \{ h(\bm S_n^{\scs (1)})\mid \bm X_n \}$ are always to be understood as integration of $h(\bm S_n^{\scs (1)})$ with respect to $\Pr^{\bm S_n\ex{(1)} \mid \bm X_n}$ (\citealp{Kal02}, Theorem 6.4).

Lemma~\ref{lem:uncond:cond} below is one of the main result of this note and essentially shows that the unconditional weak convergence of a statistic jointly with two of its bootstrap replicates is equivalent to the convergence in probability of the conditional law of a bootstrap replicate. The latter (with convergence in probability possibly replaced by almost sure convergence) is the classical mathematical definition of the asymptotic validity of a resampling scheme. A further equivalent formulation, of interest for applications, is also provided. Parts of these equivalences can also be found in \cite{DumZer13}, Lemma 4.1, relying on ideas put forward in \cite{Hoe52} and also exploited in \cite{Rom89} and \cite{ChuRom13}.

Recall that the bounded Lipschitz metric $d_\BL$ between probability measures $P,Q$ on a separable metric space $(\Db,d)$ equipped with the Borel sigma field $\Dc$ is defined by
\[
d_\BL(P,Q) = \sup_{f \in \BL_1(\Db)} \big| \textstyle \int f dP - \int f dQ \big|,
\]
where $\BL_1(\Db)$ denotes the set of functions $h:\Db \to [-1,1]$ such that $|h(x) - h(y)| \leq d(x,y)$ for all $x,y \in \Db$. Moreover, recall the Kolmogorov distance $d_K$ between  probability measures $P,Q$ on $\R^d$, defined by
\[
d_K(P,Q) = \sup_{\bm x\in\R^d} \big| P \{ (-\bm \infty, \bm x] \} - Q \{ (-\bm \infty, \bm x] \} \big| .
\]
Finally, denote the empirical distribution of the sample $\bm S_n^{\scs (1)}, \dots, \bm S_n^{\scs (M)}$ by
\[
\hat \Pr_M^{\bm S_n} = \frac1M \sum_{i=1}^M \delta_{\bm S_n^{(i)}}.
\]

\begin{lem}[Equivalence of unconditional and conditional formulations] \label{lem:uncond:cond} Suppose that Condition~\ref{cond:Db} is met. Assume further that $\bm S_n =\bm S_n(\bm X_n)$ converges weakly to some random variable $\bm S$ in $\Db$. Then, the following four assertions are equivalent: 
\begin{align*}
&(a) &   &
\Pr^{(\bm S_n, \bm S_n^{(1)}, \bm S_n^{(2)})} \leadsto \Pr^{\bm S} \otimes \Pr^{\bm S} \otimes \Pr^{\bm S} 
\quad \text{as $n\to\infty$,} & & \\
&(b) &  &
\Pr^{(\bm S_n, \bm S_n^{(1)}, \dots, \bm S_n^{(M)})} \leadsto (\Pr^{\bm S})^{\otimes (M+1)}
\quad \text{as $n\to\infty$ and for any $M\ge 2$,} &  & \\
&(c) &  &
d_{\BL}\left(\Pr^{ \bm S_n^{(1)} \mid \bm X_n}, \Pr^{\bm S_n}\right)  \p 0 
\quad \text{as $n\to\infty$,} &  & \\
&(d) &  &
d_\BL\left(\hat \Pr_M^{\bm S_n} , \Pr^{\bm S_n} \right)  \p 0
\quad \text{as $n,M\to\infty$}. &  & 
\intertext{If, additionally,  $\Db=\R^d$ and the (cumulative) distribution function (d.f.) of $\bm S$ is continuous, then the preceding four assertions are also equivalent to
}
&(e) &   &
d_{K}\left(\Pr^{ \bm S_n^{(1)} \mid \bm X_n}, \Pr^{\bm S_n}\right) \p 0
\quad \text{as $n\to\infty$,} & & \\
&(f) &  &
d_K\left( \hat \Pr_M^{\bm S_n} , \Pr^{\bm S_n} \right)  \p 0
\quad \text{as $n,M\to\infty$}. &  & 
\end{align*}
\end{lem}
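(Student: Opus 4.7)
My plan is to establish the cycle $(b) \Rightarrow (a) \Rightarrow (c) \Rightarrow (b)$ together with the equivalence $(c) \Leftrightarrow (d)$; the Kolmogorov-metric versions $(e)$ and $(f)$ then follow from the classical fact that, on $\R^d$, a sequence of probability measures converges to a limit with continuous d.f.\ in $d_{\BL}$ if and only if it does so in $d_K$ (a P\'olya-type argument), combined with $d_K(\Pr^{\bm S_n}, \Pr^{\bm S}) \to 0$ and the triangle inequality. The implication $(b) \Rightarrow (a)$ is trivial upon taking $M = 2$.

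The crux is $(a) \Leftrightarrow (c)$, which I would prove via Hoeffding's decoupling trick. Setting $P_n := \Pr^{\bm S_n^{(1)} \mid \bm X_n}$, conditional independence of $\bm S_n^{(1)}, \bm S_n^{(2)}$ given $\bm X_n$ yields, for every $f \in \BL_1(\Db)$,
\[
\Ex\bigl[f(\bm S_n^{(1)}) f(\bm S_n^{(2)})\bigr] = \Ex\bigl[(P_n f)^2\bigr], \qquad \Ex\bigl[f(\bm S_n^{(1)})\bigr] = \Ex[P_n f].
\]
Assumption $(a)$ forces the two left-hand sides to converge to $(\Ex f(\bm S))^2$ and $\Ex f(\bm S)$, respectively, so $\Var(P_n f) \to 0$ and $P_n f \to \Ex f(\bm S)$ in $L^2$, hence in probability. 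To upgrade this pointwise-in-$f$ statement to $d_{\BL}(P_n, \Pr^{\bm S}) \p 0$, I would exploit tightness of $\Pr^{\bm S}$ (Ulam) and of the marginal laws $\Pr^{\bm S_n^{(1)}}$ (from $\bm S_n^{(1)} \leadsto \bm S$, Prokhorov) to restrict the supremum defining $d_{\BL}$ to a compact $K \subset \Db$, then use Arzel\`a--Ascoli to extract a finite $\eps$-net of $\BL_1(\Db)|_K$ for the supremum norm on $K$. Combined with $d_{\BL}(\Pr^{\bm S_n}, \Pr^{\bm S}) \to 0$ (from $\bm S_n \leadsto \bm S$) and the triangle inequality, this yields $(c)$.

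For $(c) \Rightarrow (b)$, conditional independence gives, for every tensor product $h = f_0 \otimes f_1 \otimes \cdots \otimes f_M$ with $f_i \in C_b(\Db)$,
\[
\Ex[h(\bm S_n, \bm S_n^{(1)}, \dots, \bm S_n^{(M)})] = \Ex\!\Bigl[f_0(\bm S_n) \prod_{i=1}^M (P_n f_i)\Bigr].
\]
Each $P_n f_i \p \Ex f_i(\bm S)$ by $(c)$, so Slutsky applied together with $\bm S_n \leadsto \bm S$ and bounded convergence shows the right-hand side tends to $\prod_{i=0}^M \Ex f_i(\bm S)$. A Stone--Weierstrass/tightness density argument then extends convergence to all $h \in C_b(\Db^{M+1})$, proving $(b)$; setting $M = 2$ recovers $(a)$.

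Finally, for $(c) \Leftrightarrow (d)$ I would use
\[
d_{\BL}\!\left(\hat\Pr_M^{\bm S_n}, \Pr^{\bm S_n}\right) \leq d_{\BL}\!\left(\hat\Pr_M^{\bm S_n}, P_n\right) + d_{\BL}\!\left(P_n, \Pr^{\bm S_n}\right),
\]
together with the observation that, conditionally on $\bm X_n$, $\hat\Pr_M^{\bm S_n}$ is the empirical measure of $M$ i.i.d.\ draws from $P_n$, so $d_{\BL}(\hat\Pr_M^{\bm S_n}, P_n) \to 0$ almost surely as $M \to \infty$ (Varadarajan's theorem on separable metric spaces); both implications then follow by picking $M = M_n$ growing fast enough. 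The main obstacle I anticipate is the uniform-in-$f$ step of $(a) \Rightarrow (c)$: reducing the uncountable supremum defining $d_{\BL}$ to finitely many test functions via tightness and Arzel\`a--Ascoli. Once this reduction is in place, the remaining implications are essentially book-keeping.
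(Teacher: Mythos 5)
Your outline for $(b)\Rightarrow(a)\Rightarrow(c)\Rightarrow(b)$ matches the paper's argument in substance: Hoeffding's decoupling trick gives $L^2$ (hence in-probability) convergence of $P_n f = \int f\,d\Pr^{\bm S_n^{(1)}\mid \bm X_n}$ to $\Exp f(\bm S)$; tightness plus Arzel\`a--Ascoli upgrades this pointwise-in-$f$ statement to $d_\BL(P_n,\Pr^{\bm S}) \p 0$ (the paper isolates this upgrade as a separate lemma on metrizing weak convergence by random measures); and the factorization of conditional expectations gives $(c)\Rightarrow(b)$. The paper's only structural difference is that it first proves an intermediate lemma with a fixed reference measure $Q$ in place of $\Pr^{\bm S_n}$ and then recovers the stated lemma by the triangle inequality and $d_\BL(\Pr^{\bm S_n},\Pr^{\bm S})\to 0$, which is bookkeeping.

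The genuine gap is in $(c)\Leftrightarrow(d)$, specifically in the direction $(c)\Rightarrow(d)$. You invoke Varadarajan's theorem to obtain $d_\BL(\hat\Pr_M^{\bm S_n}, P_n)\to 0$ almost surely as $M\to\infty$ for each \emph{fixed} $n$, but $(d)$ is a joint (net) limit in $(n,M)$: given $\eps,\delta>0$, you need a single threshold $M_0$ that works simultaneously for all large $n$. Varadarajan gives no control on how the required $M$ grows with $n$ --- it depends on the covering numbers of a tight subset of the support of $P_n$, which may degenerate with $n$ --- and ``picking $M=M_n$ growing fast enough'' only establishes convergence along a diagonal sequence, not the net limit. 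The paper avoids this by a direct Chebyshev bound
\[
\Pr\Big\{ \Big|\textstyle\int f\,d\hat\Pr_M^{\bm S_n} - \int f\,dP_n\Big| \geq \eps \Big\} \leq \frac{K}{\eps^2 M},
\]
which is manifestly independent of $n$ because the conditional variance of a bounded conditionally i.i.d.\ average is at most a constant over $M$ regardless of the conditioning variable $\bm X_n$. Combining this pointwise-in-$f$, uniform-in-$n$ bound with the metrization lemma yields the joint limit. (Your direction $(d)\Rightarrow(c)$ does go through with Varadarajan, since $d_\BL(P_n,\Pr^{\bm S_n})$ does not depend on $M$, so for each $n$ you may choose $M=M(n)$ as large as needed; it is only $(c)\Rightarrow(d)$ that requires the uniform rate.)
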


Before providing a proof of this lemma, let us give an interpretation of the assertions. The intuition behind Assertions~$(a)$ and $(b)$ is that a resampling scheme should be considered consistent if the bootstrap replicates $\bm S_{n}^{\scriptscriptstyle (1)}, \bm S_{n}^{\scriptscriptstyle (2)}, \dots$ behave approximately as independent copies of~$\bm S_n$, the more so that $n$ is large.  Assertions~$(c)$ and $(e)$ translate mathematically the idea that a resampling scheme should be considered valid if the distribution of a bootstrap replicate given the data is close to the distribution of the original statistic $\bm S_n$, the more so that $n$ is large. Assertions~$(d)$ and $(f)$ can be regarded as empirical analogues of Assertions~$(c)$ and~$(e)$, respectively: the unobservable conditional law of a bootstrap replicate is replaced by the empirical law of a sample of $M$ bootstrap replicates, providing an approximation of the law of $\bm S_n$ that improves as $n,M$ increase.

Assertions~$(c)$ and $(e)$ are known to hold for many statistics and resampling schemes, possibly as a consequence of general consistency results such as the one of \cite{BerDuc91} \citep[see also][Section~2.1]{Hor01}. Assertions~$(a)$ and $(b)$ are substantially less frequently encountered in the literature and appear mostly as a consequence of similar assertions at a stochastic process level; see Lemma~\ref{lem:proc} in Section~\ref{sec:extension} and the references therein.

Let us finally turn to the proof of Lemma~\ref{lem:uncond:cond}. The latter is in fact a corollary of the following, slightly more general lemma which does not rely on the additional assumption that $\bm S_n$ converges weakly. 

\begin{lem}\label{lem:uncond:cond2} Suppose that Condition~\ref{cond:Db} is met and let $Q$ be a fixed probability measure on $(\Db, \Dc)$. Then, the following four assertions are equivalent:
\begin{align*}
&(a) &   &
\Pr^{(\bm S_n^{(1)}, \bm S_n^{(2)})} \leadsto Q \otimes Q
\quad \text{as $n\to\infty$,} & & \\
&(b) &  &
\Pr^{(\bm S_n^{(1)}, \dots, \bm S_n^{(M)})} \leadsto Q^{\otimes M}
\quad \text{as $n\to\infty$ and for any $M\ge 2$,} &  & \\
&(c) &  &
d_{\BL}\left(\Pr^{ \bm S_n^{(1)} \mid \bm X_n}, Q\right)  \p 0 
\quad \text{as $n\to\infty$,} &  & \\
&(d) &  &
d_\BL\left(\hat \Pr_M^{\bm S_n} , Q \right)  \p 0
\quad \text{as $n,M\to\infty$}. &  & 
\intertext{If, additionally,  $\Db=\R^d$ and the d.f.\ of $Q$ is continuous, then the preceding four assertions are also equivalent to
}
&(e) &   &
d_{K}\left(\Pr^{ \bm S_n^{(1)} \mid \bm X_n}, Q \right) \p 0
\quad \text{as $n\to\infty$,} & & \\
&(f) &  &
d_K\left( \hat \Pr_M^{\bm S_n} , Q  \right)  \p 0
\quad \text{as $n,M\to\infty$}. &  & 
\end{align*}
\end{lem}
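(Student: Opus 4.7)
I would prove the loop $(b)\Rightarrow(a)\Rightarrow(c)\Rightarrow(b)$ together with $(c)\Leftrightarrow(d)$, then derive $(c)\Leftrightarrow(e)$ and $(d)\Leftrightarrow(f)$ under the extra assumption via P\'olya's theorem. Write $\mu_n = \Pr^{\bm S_n^{(1)}\mid\bm X_n}$ for the random regular conditional distribution. The backbone of the argument is the Hoeffding-type identity
\[
\Ex\left[\left(\int h\,d\mu_n\right)^{2}\right] = \Ex\big[h(\bm S_n^{(1)})\,h(\bm S_n^{(2)})\big],
\]
valid for any bounded measurable $h:\Db\to\R$ by the conditional independence of $\bm S_n^{(1)}$ and $\bm S_n^{(2)}$ given $\bm X_n$. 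Combined with $\Ex\int h\,d\mu_n = \Ex h(\bm S_n^{(1)})$, it represents the variance of $\int h\,d\mu_n$ in terms of moments of the two-fold joint law of the bootstrap replicates.

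\textbf{The cycle $(b)\Rightarrow(a)\Rightarrow(c)\Rightarrow(b)$.} The implication $(b)\Rightarrow(a)$ is just the case $M=2$. For $(a)\Rightarrow(c)$, feeding any $f\in\BL_1(\Db)$ into the identity above shows that assumption $(a)$ forces $\Ex[f(\bm S_n^{(1)})f(\bm S_n^{(2)})]\to(\int f\,dQ)^2$ and $\Ex f(\bm S_n^{(1)})\to\int f\,dQ$, so $\int f\,d\mu_n \to \int f\,dQ$ in $L^2$ and hence in probability. Since $\Db$ is separable, the weak topology on $\Pr(\Db)$ is metrized by $d_\BL$ and admits a countable convergence-determining family $\{f_k\}\subset\BL_1$; the standard subsequence-plus-diagonal-extraction argument then upgrades the pointwise-in-$f$ convergence in probability to $d_\BL(\mu_n,Q)\p 0$. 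For $(c)\Rightarrow(b)$, conditional independence and a telescoping identity yield, for any $g_1,\dots,g_M\in\BL_1$,
\[
\left|\Ex\prod_{i=1}^M g_i(\bm S_n^{(i)}) - \prod_{i=1}^M \int g_i\,dQ\right|
= \left|\Ex\prod_{i=1}^M \int g_i\,d\mu_n - \prod_{i=1}^M \int g_i\,dQ\right|
\le M\,\Ex[d_\BL(\mu_n,Q)],
\]
which tends to $0$ by $(c)$ and boundedness of $d_\BL$; since product-form test functions are convergence-determining on the separable product $\Db^M$, this yields $(b)$.

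\textbf{Equivalence $(c)\Leftrightarrow(d)$ and the $\R^d$ variants.} Writing $\hat\Pr_M = \hat\Pr_M^{\bm S_n}$, for $(c)\Rightarrow(d)$ I use the triangle inequality $d_\BL(\hat\Pr_M,Q)\le d_\BL(\hat\Pr_M,\mu_n)+d_\BL(\mu_n,Q)$: the second term vanishes by $(c)$, while for the first, conditionally on $\bm X_n$ each $\int f\,d\hat\Pr_M$ is an empirical mean of $M$ conditionally i.i.d.\ random variables with conditional variance at most $1/M$, so the countable-family argument again gives $d_\BL(\hat\Pr_M,\mu_n)\p 0$ as $M\to\infty$, uniformly in $n$. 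Conversely, since $\mu_n$ is the conditional mean of $\hat\Pr_M$ given $\bm X_n$, Jensen yields $|\int f\,d\mu_n-\int f\,dQ|\le\Ex[d_\BL(\hat\Pr_M,Q)\mid\bm X_n]$ for every $f\in\BL_1$; taking a sup over the countable determining family and then an expectation produces $\Ex[d_\BL(\mu_n,Q)]\le\Ex[d_\BL(\hat\Pr_M,Q)]$, which goes to $0$ by $(d)$ and boundedness of $d_\BL$. Finally, when $\Db=\R^d$ and $Q$ has continuous d.f., P\'olya's theorem turns any a.s.\ weak convergence $P_n\to Q$ into $d_K(P_n,Q)\to 0$ a.s., so the subsequence principle delivers $(c)\Leftrightarrow(e)$, and the same argument applied to $\hat\Pr_M$ gives $(d)\Leftrightarrow(f)$. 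The main obstacle I expect is the non-measurability of the sup defining $d_\BL$: replacing it by a sup over a countable convergence-determining subclass of $\BL_1$, possible by separability of $\Db$, is what makes the subsequence/diagonal extraction legitimate within the measurable framework of Condition~\ref{cond:Db}.
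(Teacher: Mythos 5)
Your proposal follows the paper's backbone: the Hoeffding second-moment identity for $(a)\Rightarrow(c)$, factoring by conditional independence for $(c)\Rightarrow(b)$, and a conditional Chebyshev bound linking $(c)$ and $(d)$. Where you differ is in how the supporting machinery is discharged. For $(a)\Rightarrow(c)$ you re-derive, via a countable family plus subsequence/diagonal extraction plus the non-random Lemma~\ref{lem:bl1}, what the paper outsources in one line to the random-measure Lemma~\ref{lem:bl1c}, whose statement already packages the equivalence of pointwise-in-$f$ convergence in probability with $d_\BL\p 0$ together with the measurability of $d_\BL$. Your $(d)\Rightarrow(c)$ via conditional Jensen (exploiting that $\mu_n$ is the conditional mean of $\hat\Pr_M^{\bm S_n}$ given $\bm X_n$) is a genuinely different and rather elegant argument; the paper instead proves $(c)\Leftrightarrow(d)$ in one stroke by showing $\int f\,d\hat\Pr_M^{\bm S_n}-\int f\,d\mu_n\p 0$ with a Chebyshev bound uniform in $n$ and then invoking Lemma~\ref{lem:bl1c} in both directions. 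Your telescoping inequality for $(c)\Rightarrow(b)$ makes explicit an estimate the paper treats by dominated convergence for convergence in probability; both work.

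One imprecision is worth flagging. You identify measurability of $d_\BL(\mu_n,Q)$ as the main obstacle and propose to handle it by passing to a countable \emph{convergence-determining} subclass of $\BL_1(\Db)$. That is not quite the right property: to make $d_\BL(\mu_n,Q)$ measurable so that ``$\p 0$'' is even well-posed, you need a countable subclass $\Hc\subset\BL_1(\Db)$ over which the supremum defining $d_\BL$ is \emph{attained}, i.e.\ $\sup_{h\in\Hc}\big|\int h\,d(\hat P-P)\big|=d_\BL(\hat P,P)$ for every random measure $\hat P$ — a strictly stronger requirement than being convergence-determining. The supplementary proof of Lemma~\ref{lem:bl1c} constructs such a class explicitly (bump functions built from distances to a countable dense set, closed under finite maxima and suitable differences), and this is where most of the real work lives. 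Once you treat Lemma~\ref{lem:bl1c} as a black box, your subsequence extraction in $(a)\Rightarrow(c)$ becomes redundant: the lemma already encodes exactly that step.
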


The proof of this lemma will in turn be based on the following two possibly well-known lemmas about metrizing weak convergence in separable metric spaces. Note that the results are stated in terms of \emph{nets} which generalize sequences (see, e.g., \citealp{VanWel96}, Section~1.1) in order to account for the net convergences in Assertions~$(d)$ and $(f)$ of the two preceding lemmas. 

For sequences, the forthcoming assertions regarding the Kolmogorov distance can for instance be found in \cite{Van98}, see Lemma 2.11 and  Problem 23.1, while the assertions regarding the bounded Lipschitz metric can be found in \cite{Dud02}, Theorem 11.3.3, for the non-random version (see Lemma~\ref{lem:bl1} below) and in \cite{DumZer13}, Section~2, for the random one (see Lemma~\ref{lem:bl1c} below). Detailed proofs are provided in the supplementary material for the sake of completeness. 

\begin{lem} \label{lem:bl1}
Suppose that $(\Db,d)$ is a separable metric space and let $P_\alpha$ be a net of probability measures on $(\Db,\mathcal D)$, where $\Dc$ denotes the Borel sigma field. Then $P_\alpha \leadsto P$ if and only if $d_\BL(P_\alpha,P) \to 0$. If $\Db=\R^d$ and if the d.f.\ of $P$ is continuous, we also have equivalence to $d_K(P_\alpha,P) \to 0$.
\end{lem}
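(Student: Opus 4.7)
My plan is to follow the classical proofs of \cite{Dud02}, Theorem~11.3.3 (for the bounded Lipschitz equivalence) and the multidimensional Polya-type argument (for the Kolmogorov equivalence), observing that all steps transfer verbatim from sequences to nets because they rely only on finite approximations together with the directed-index definition of net convergence.

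For the $d_\BL$ equivalence, the easy direction is immediate: if $d_\BL(P_\alpha, P) \to 0$, then for any bounded Lipschitz $f$ the rescaled function $f/\max\{\|f\|_\infty, \mathrm{Lip}(f)\}$ lies in $\BL_1(\Db)$, so $\int f\, dP_\alpha \to \int f\, dP$, which yields $P_\alpha \leadsto P$ since bounded Lipschitz functions are a determining class on any metric space (apply the Portmanteau criterion to $f_k(x) = \min\{k\, d(x, G^c), 1\}$ for an open set $G$). For the converse, I would first pass to the metric completion $\bar{\Db}$, which is Polish, extending each Borel probability measure to $\bar{\Db}$ via the trace construction; since the McShane extension theorem lets one lift any $f \in \BL_1(\Db)$ to some $\bar f \in \BL_1(\bar{\Db})$ with $\bar f|_{\Db}=f$, the $d_\BL$ distances on $\Db$ and on $\bar{\Db}$ coincide, and weak convergence transfers by restricting test functions. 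On the Polish space $\bar{\Db}$, Ulam's theorem gives tightness of $P$: for $\eps > 0$, fix a compact $K \subset \bar{\Db}$ with $P(K) > 1 - \eps$. Since $\BL_1(\bar{\Db})|_K$ is uniformly bounded and equicontinuous, Arzela-Ascoli produces a finite $\eps$-net $f_1, \dots, f_N$. For any $f \in \BL_1(\bar{\Db})$ and its nearest $f_i$, the triangle inequality
\[
\left|\textstyle \int f\, dP_\alpha - \int f\, dP \right| \le \left|\textstyle\int (f-f_i)\, dP_\alpha\right| + \left|\textstyle\int f_i\, dP_\alpha - \int f_i\, dP\right| + \left|\textstyle\int (f_i-f)\, dP\right|,
\]
combined with the pointwise estimate $|f - f_i| \le 3\eps$ on the open $\eps$-enlargement $K^{\eps}$ (using Lipschitz-1) and $|f - f_i| \le 2$ elsewhere, reduces matters to controlling $P_\alpha((K^{\eps})^c)$. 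Portmanteau applied to the open set $K^{\eps}$ gives $\liminf_\alpha P_\alpha(K^{\eps}) \ge P(K^{\eps}) \ge 1 - \eps$, so the outer terms are of order $\eps$ eventually; the middle term tends to zero uniformly over the finite collection $f_1, \dots, f_N$, giving $d_\BL(P_\alpha, P) = O(\eps)$ eventually and hence $d_\BL(P_\alpha, P) \to 0$.

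For the Kolmogorov part in $\R^d$ with continuous limiting d.f.\ $F$, the direction $d_K \to 0 \Rightarrow P_\alpha \leadsto P$ is immediate from pointwise convergence of d.f.s. For the reverse, weak convergence yields pointwise convergence of d.f.s at every continuity point of $F$, hence on all of $\R^d$; partitioning each coordinate axis at points whose $F$-values form an $\eps$-grid leaves only finitely many grid corners, and monotonicity of multivariate d.f.s upgrades pointwise convergence at these corners to uniform convergence with an error of order $\eps$. I expect the main obstacle to be the careful bookkeeping in the Arzela-Ascoli step of the hard direction for $d_\BL$, together with the slightly delicate passage to the completion $\bar{\Db}$ (requiring the McShane extension to match $d_\BL$ on the two spaces) needed to secure tightness of $P$; the adaptation from sequences to nets itself is merely cosmetic, since every statement "for $n$ sufficiently large" is replaced by "for $\alpha$ beyond some index in the directed set".
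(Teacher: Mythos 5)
Your proposal is correct and follows essentially the same route as the paper: both adapt Dudley's Theorem~11.3.3 argument (Ulam tightness on the completion, Arzel\`a--Ascoli to obtain a finite $\eps$-net of $\BL_1$ restricted to a compact set, a Portmanteau bound on the $\eps$-enlargement, and a triangle-inequality decomposition), and both defer the Kolmogorov statement to the classical P\'olya-type argument via Lemma~2.11 of van~der~Vaart. The only cosmetic differences are that the paper derives the bound $\liminf_\alpha P_\alpha(K^\eps) > 1-\eps$ by integrating the explicit Lipschitz function $g(x)=\max\{1-d(x,K)/\eps,0\}$ rather than invoking the open-set form of Portmanteau directly, and the paper treats the passage to the completion more tersely than your McShane-extension remark.
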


A random probability measure $\hat P$ on a separable metric space $(\Db, d)$ is a mapping from some probability space $(\Omega, \mathcal A, \Pr)$ into the set of Borel probability measures on $(\Db, \mathcal D)$  such that $ \int f d\hat P$ considered as a function from $\Omega$ to $\R$ is measurable for any bounded and continuous function $f$ on $\Db$ (see, e.g., \citealp{DumZer13}, Section~2). Note that, under Condition~\ref{cond:Db},  $\omega \mapsto \Pr^{\bm S_n\ex{(1)} \mid \bm X_n}(\bm X_n(\omega), \cdot)$ is a sequence of such random probability measures. 

\begin{lem} \label{lem:bl1c}
Suppose that $(\Db,d)$ is a  separable metric space and let $(\hat P_\alpha)_\alpha$ denote a net of random probability measures on $(\Db,\Dc)$ defined on a probability space $(\Omega, \Ac, \Pr)$. Then, 
\begin{align} \label{eq:rw}
\int fd\hat P_\alpha \p \int f dP 
\end{align}
for any $f$ bounded and Lipschitz continuous if and only if $d_\BL(\hat P_\alpha,P) \to 0$ in probability. Further, $d_\BL(\hat P_\alpha,P)$, considered as a map from $\Omega$ to $\R$, is measurable.

If $\Db=\R^d$ and if the d.f.\ of $P$ is continuous, then \eqref{eq:rw} is also equivalent to $d_K(\hat P_\alpha,P) \to 0$ in probability, and $d_K(\hat P_\alpha,P)$ is measurable as well.
\end{lem}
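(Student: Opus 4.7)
My approach is to deduce both the measurability claims and the equivalence-of-convergences claims in Lemma~\ref{lem:bl1c} from the non-random Lemma~\ref{lem:bl1} by the standard ``every sequence has a further almost-surely convergent subsequence'' criterion for convergence in probability, after first reducing the uncountable suprema defining $d_\BL$ and $d_K$ to countable ones via separability of $\Db$.

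For measurability of $d_\BL(\hat P_\alpha, P)$, I would fix a countable dense set $D \subset \Db$ and embed $\BL_1(\Db)$ into $[-1,1]^D$ by restriction. Since the latter space is metrizable and hence separable, there is a countable subfamily $\Fc \subset \BL_1(\Db)$ that is dense for pointwise convergence on $D$. Because every element of $\BL_1(\Db)$ is $1$-Lipschitz, pointwise convergence on $D$ of a sequence from $\Fc$ to some $f \in \BL_1(\Db)$ forces pointwise convergence on all of $\Db$; combined with bounded convergence, this gives $\int f_k\, d\mu \to \int f\, d\mu$ for every probability measure $\mu$. Consequently the supremum defining $d_\BL(\hat P_\alpha, P)$ may equivalently be taken over the countable family $\Fc$, making $d_\BL(\hat P_\alpha, P)$ a countable supremum of measurable functions. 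For $d_K$ under $\Db=\R^d$ with continuous $P$-d.f., the analogous reduction proceeds by restricting the sup over $\R^d$ to $\bm x \in \Q^d$ (using continuity of the d.f.\ of $P$ together with right-continuity of both d.f.s), and by noting that each $\omega \mapsto \hat P_\alpha(\omega, (-\infty,\bm x])$ is measurable as a monotone pointwise limit of integrals $\int h_\eps\, d\hat P_\alpha(\omega)$ against a decreasing sequence of bounded Lipschitz functions $h_\eps \downarrow \1_{(-\infty,\bm x]}$ as $\eps \downarrow 0$.

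For the equivalence, the direction ``$d_\BL(\hat P_\alpha, P) \to 0$ in probability $\Rightarrow$~\eqref{eq:rw}'' is immediate since every bounded Lipschitz $f$ satisfies $f/C \in \BL_1(\Db)$ for a constant $C$ depending only on $\|f\|_\infty$ and the Lipschitz constant of $f$, giving $|\int f\, d\hat P_\alpha - \int f\, dP| \le C\, d_\BL(\hat P_\alpha, P)$; the $d_K$ analogue is equally direct, using the Lipschitz approximations from above. For the converse, I would apply the subsequence criterion: given any sequence $(\alpha_n)$ extracted cofinally from the net, the hypothesis~\eqref{eq:rw} applied to each $f \in \Fc$, combined with a diagonal extraction, yields a sub-subsequence along which $\int f\, d\hat P_{\alpha_{n_k}} \to \int f\, dP$ almost surely, simultaneously for every $f \in \Fc$. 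On the corresponding almost-sure event, bounded convergence extends this to all $f \in \BL_1(\Db)$, so that the deterministic Lemma~\ref{lem:bl1} applies pointwise in $\omega$ and delivers $d_\BL(\hat P_{\alpha_{n_k}}(\omega), P) \to 0$ and, in the $\R^d$ continuous-d.f.\ case, $d_K(\hat P_{\alpha_{n_k}}(\omega), P) \to 0$ as well. The subsequence criterion then yields the desired convergence in probability.

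The main obstacle I anticipate is the measurability reduction: producing a countable subfamily of $\BL_1(\Db)$ that is simultaneously pointwise dense (to collapse the uncountable sup) and that serves as a convergence-determining class for the subsequence argument. The passage from sequences to nets introduces only a mild technicality, since the subsequence criterion for convergence in probability along a net still applies to every cofinal sequence extracted from it.
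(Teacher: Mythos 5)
Your proof is essentially correct but takes a genuinely different route from the paper on both halves of the lemma, so a comparison is in order. For the \emph{measurability} of $d_\BL(\hat P_\alpha,P)$, the paper constructs an explicit countable family: ``tent'' functions $h(x)=q\max\{1-p\,d(x,s),0\}$ with $s$ ranging over a countable dense set and $p,q$ rational, then passes to finite maxima and differences, and verifies by hand that the resulting countable class determines the supremum in $d_\BL$. You instead invoke the separability of $[-1,1]^{D}$ in the product topology (with $D$ countable dense in $\Db$) to extract a countable $\Fc\subset\BL_1(\Db)$ whose restrictions to $D$ are pointwise dense, and observe that $1$-Lipschitzness upgrades pointwise convergence on $D$ to pointwise convergence on $\Db$, after which bounded convergence collapses the sup to a countable one. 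This is shorter and cleaner, if less constructive; it also has the nice feature of reusing the same family $\Fc$ in the equivalence argument, whereas the paper uses two different families for the two tasks. You also supply a justification (approximating $\1_{(-\infty,\bm x]}$ by Lipschitz functions) for the measurability of $\omega\mapsto\hat P_\alpha(\omega,(-\infty,\bm x])$, a point the paper asserts without comment. For the \emph{equivalence}, the paper runs a direct quantitative argument: fixing $\delta,\eta>0$, it sets $\eps=\delta/11$, recycles the compact $K$ and the finite Lipschitz net $g,f_1,\dots,f_m$ from the proof of Lemma~\ref{lem:bl1}, bounds $\Pr(A_\alpha)<\eta$ where $A_\alpha$ is the event that one of these finitely many test integrals deviates by more than $\eps$, and shows $d_\BL\le 11\eps$ on $A_\alpha^c$. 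You instead use the ``every subsequence has an a.s.-convergent further subsequence'' criterion plus a diagonal extraction over $\Fc$, reducing to the deterministic Lemma~\ref{lem:bl1} applied pointwise in~$\omega$.

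The one genuine gap is the dismissal of nets as a ``mild technicality.'' The subsequence characterization of convergence in probability along a net $(X_\alpha)_{\alpha\in I}$ is \emph{not} valid for an arbitrary directed set $I$: it requires $I$ to admit a cofinal sequence. If $I=\omega_1$ with its order, for instance, there is no cofinal sequence, so the criterion ``for every cofinal sequence $(\alpha_n)$, $X_{\alpha_n}\to X$ in probability'' is vacuously satisfied and tells you nothing about net convergence. Your argument therefore proves the lemma only for nets indexed by directed sets possessing a cofinal sequence. This covers the applications actually made in the paper (assertions $(d)$ and $(f)$ of Lemmas~\ref{lem:uncond:cond} and~\ref{lem:uncond:cond2} index by $\N\times\N$, which has the diagonal as a cofinal sequence), so in practice nothing is lost; but it is strictly weaker than the statement as written, and it is precisely this issue that the paper's quantitative $\eps$-$\eta$ argument sidesteps by never passing to sequences at all. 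You should either add the hypothesis that the directed set admits a cofinal sequence, or replace the subsequence step with a direct estimate in the spirit of the paper.
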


We can now prove Lemma~\ref{lem:uncond:cond2}. 

\begin{proof}[Proof of Lemma~\ref{lem:uncond:cond2}]
We begin by showing the equivalence between $(a)$, $(b)$, $(c)$ and $(d)$. Note that, even though the equivalence between $(a)$ and $(c)$ is almost identical to Lemma 4.1 of \cite{DumZer13}, we provide a self-contained proof to ease readability.

$(b) \Rightarrow (a)$: trivial.

$(a) \Rightarrow (c)$: by Lemma~\ref{lem:bl1}, we only need to show that 
\begin{equation*}
\int f d \Pr^{ \bm S_n^{(1)} \mid \bm X_n} \p \int fd Q  \quad \text{as } n \to \infty,
\end{equation*}
for all bounded and Lipschitz continuous $f$. As in the proof of Lemma~4.1 in \cite{DumZer13}, we can even prove $L^2$-convergence. Let $\bm S$ denote a random variable with distribution $Q$. Then, by the law of iterated expectation,
\begin{multline*}
\textstyle \Exp \Big\{ \Big(\int f d \Pr^{ \bm S_n^{(1)} \mid \bm X_n} -  \int f dQ \Big)^2 \Big\}
=
\Exp \Big( \big[ \Exp \{ f(\bm S_n^{(1)}) \mid \bm X_n \} - \Exp \{ f(\bm S) \} \big]^2 \Big) \\
= \Exp \left( \big[ \Exp \{ f(\bm S_n^{(1)}) \mid \bm X_n \} \big]^2 \right) -  2   \Exp \{ f(\bm S_n^{(1)}) \} \Exp \{ f(\bm S) \} + \big[ \Exp \{ f(\bm S) \} \big] ^2.
\end{multline*}
Since $\bm S_n^{(1)}$ and $\bm S_n^{(2)}$ are identically distributed and conditionally independent given $\bm X_n$, the first term on the right-hand side can be written as 
\begin{equation*}
\Exp \big[ \Exp \{ f(\bm S_n^{(1)}) f(\bm S_n^{(2)}) \mid \bm X_n\}  \big] 
= 
\Exp \big\{ f(\bm S_n^{(1)}) f(\bm S_n^{(2)}) \big\} .
\end{equation*}
The function $(x,y) \mapsto f(x) f(y)$ being bounded and continuous, the convergence in $(a)$ implies that, as $n \to \infty$,
\[
\textstyle \Exp \Big\{ \Big(\int f d \Pr^{ \bm S_n^{(1)} \mid \bm X_n} -  \int f dQ \Big)^2 \Big\}
\to
\Exp \big[ f(\bm S^{(1)}) f(\bm S^{(2)}) \big] -  2   \Exp \{ f(\bm S^{(1)}) \} \Exp \{ f(\bm S) \} + \big[ \Exp \{ f(\bm S) \} \big] ^2 = 0,
\]
where $\bm S^{(1)}$ and $\bm S^{(2)}$ are independent copies of $\bm S$.

$(c) \Rightarrow (b)$: by Lemma~\ref{lem:bl1} and Corollary 1.4.5 in \cite{VanWel96}, it suffices to show that, as $n \to \infty$, 
\[
\Exp \{ f_1(\bm S_n^{(1)}) \cdots f_M(\bm S_n^{(M)}) \} \to \prod_{j=1}^M \Exp \{ f_j(\bm S) \}
\]
for any $f_1, \dots, f_M$ bounded and Lipschitz continuous. By independence of $\bm S_n^{(1)},\dots,\bm S_n^{(M)}$ conditionally on $\bm X_n$, we can write the left-hand side as
\[
\Exp \big[ \Exp \{ f_1(\bm S_n^{(1)}) \cdots f_M(\bm S_n^{(M)})  \mid \bm X_n \} \big] 
=
\Exp\big[ \Exp\{f_1(\bm S_n^{(1)}) \mid \bm X_n\} \cdots \Exp\{ f_M( \bm S_n^{(M)}) \mid \bm X_n\} \big],
\]
and the assertion follows from $(c)$, Lemma~\ref{lem:bl1c} and dominated convergence for convergence in probability. 

$(c) \Leftrightarrow (d)$: fix $f$ bounded and Lipschitz continuous and $\eps>0$, and denote by $K$ a bound on~$f$. Then, for any $n\in\N$,
\begin{align} \label{eq:cdc}
\nonumber \Pr\Big\{ \big| \textstyle \int f d\hat \Pr_M^{\bm S_n} - \int f d\Pr^{\bm S_n^{(1)} \mid \bm X_n} \big| \ge \eps \Big\} 
& =
\nonumber \Ex \Big[ \Pr \Big\{ \big| \textstyle \int f d\hat \Pr_M^{\bm S_n}  - \Ex f(\bm S_n^{(1)}) \big| \ge \eps \, \big| \, \bm X_n \Big\}  \Big] \\
&\le
\frac{1}{\eps^2 M^2} \Ex \Big[  \Var \big\{ \textstyle \sum_{i=1}^M f(\bm S_n^{(i)}) \, \big| \, \bm X_n \big\}  \Big]  \le \displaystyle \frac{K}{\eps^2 M}
\end{align}
by Chebychev's inequality. As a consequence, 
$$
\int f d\hat \Pr_M^{\bm S_n} - \int f d\Pr^{\bm S_n^{(1)} \mid \bm X_n} \p 0
$$
as $n,M \to \infty$ since the upper bound on the right-hand side of~\eqref{eq:cdc} is independent of $n$. The equivalence $(c) \Leftrightarrow (d)$ is then a consequence of Lemma~\ref{lem:bl1c}.

Finally, if $\Db=\R^d$ and if the d.f.\ of $Q$ is continuous, the equivalences 
$(c) \Leftrightarrow (e)$ and  $(d) \Leftrightarrow (f)$ are immediate consequences of Lemma~\ref{lem:bl1c}.
\end{proof}

Lemma~\ref{lem:uncond:cond} arises finally as a simple corollary of Lemma~\ref{lem:uncond:cond2}. 

\begin{proof}[Proof of Lemma~\ref{lem:uncond:cond}]
Denote the assertions $(a)$--$(f)$ in Lemma~\ref{lem:uncond:cond2} by $(a')$--$(f')$, respectively. Let $Q=\Pr^{\bm S}$ and note that, by Lemma~\ref{lem:bl1}, $\bm S_n \leadsto \bm S$ implies that $d_{\BL}(\Pr^{ \bm S_n}, Q)\to0$. Then, the triangle inequality and Lemma~\ref{lem:uncond:cond2} immediately imply the equivalences $(c)\Leftrightarrow(c')\Leftrightarrow(d')\Leftrightarrow(d)$. 

Since $(b)\Rightarrow (a) \Rightarrow (a') \Leftrightarrow (c')$, to show the equivalence between $(a)$--$(d)$, it remains to be shown that $(c')$ implies $(b)$. By Corollary 1.4.5 in \cite{VanWel96}, it suffices to show that 
\[
\Exp \{ f_0(\bm S_n) f_1(\bm S_n^{(1)}) \cdots f_M(\bm S_n^{(M)}) \} \to \prod_{j=0}^M \Exp \{ f_j(\bm S) \}
\]
for any $f_0, \dots, f_M$ bounded and Lipschitz continuous. By independence of $\bm S_n^{\scs (1)},\dots,\bm S_n^{\scs (M)}$ conditionally on $\bm X_n$, we obtain that
\[
\Exp\big[ \Exp \{ f_0(\bm S_n) f_1(\bm S_n^{(1)}) \cdots f_M(\bm S_n^{(M)}) \mid \bm X_n \} \big] 
=
\Exp\big[ f_0(\bm S_n) \Exp \{ f_1(\bm S_n^{(1)}) \mid \bm X_n \} \cdots \Exp\{ f_M(\bm S_n^{(M)}) \mid \bm X_n\} \big],
\]
and the assertion follows from $(c')$, Lemma~\ref{lem:bl1c} and dominated convergence for convergence in probability.

Finally, if $\Db=\R^d$ and if the d.f.\ of $Q=\Pr^{\bm S}$ is continuous, $(c') \Leftrightarrow (e')$ by Lemma~\ref{lem:uncond:cond2} and the equivalences $(e)\Leftrightarrow(e')\Leftrightarrow(f')\Leftrightarrow(f)$ follow from the fact that $d_K(\Pr^{ \bm S_n}, Q)\to 0$ (a consequence of Lemma~\ref{lem:bl1}), the triangular inequality and Lemma~\ref{lem:uncond:cond2}.
\end{proof}

\section{Extension to stochastic processes with bounded sample paths}
\label{sec:extension}

As in the previous section, let $\bm X_n$ be some data formally seen as a random variable in some measurable space~$\X_n$. Furthermore, let $T$ denote an arbitrary non-empty set and let $\ell^\infty(T)$ denote the set of real-valued bounded functions on $T$ equipped with the supremum distance. Since, as already mentioned in the introduction, the latter metric space is in general neither separable nor complete, one cannot typically set $\Db = \ell^\infty(T)$ and apply the results of the previous section.

To remedy this shortcoming, we are hereafter specifically interested in the situation in which the $\Db$-valued statistic $\bm S_n$ of the previous section is a stochastic process $\G_n=\G_n(\bm X_n)$ on $T$ constructed from $\bm X_n$. It is assumed that every \emph{sample path} $t \mapsto \G_n(t,\bm X_n(\omega))$ is a bounded function so that $\G_n$ may formally be regarded as a map from the underlying probability space $\Omega$ into $\ell^\infty(T)$ without however imposing any measurability conditions. We additionally suppose that, as $n \to \infty$, $\G_n$ converges weakly in $\ell^\infty(T)$ to some tight, Borel measurable stochastic process $\G$ in the sense of Hoffmann-J\o rgensen \citep[see, e.g.,][Section~1.3]{VanWel96} (which in fact implies that $\G_n$ is asymptotically measurable). Extending the setting of Section~\ref{sec:equiv:cond:uncond}, we further assume that $\G_n^{\scs (1)} = \G_n^{\scs (1)}(\bm X_n, \bm W_n^{\scs (1)}),\G_n^{\scs (2)} = \G_n^{\scs (2)}(\bm X_n, \bm W_n^{\scs (2)}), \dots$ are \emph{bootstrap replicates} of $\G_n$, that is, stochastic processes on $T$ depending on additional identically distributed random variables $\bm W_n^{\scs (1)}, \bm W_n^{\scs (2)}, \dots$ in some measurable space $\W_n$ that can, in many cases, be interpreted as \emph{bootstrap weights} and should in general be seen as the additional sources of randomness introduced by the resampling scheme. As for $\G_n$, it is assumed that the sample paths of $\G_n^{\scs (1)}, \G_n^{\scs (2)}, \dots$ also belong to $\ell^\infty(T)$ and, when seen as maps into $\ell^\infty(T)$, no measurability assumptions are made on these bootstrap replicates either. When $\bm X_n$ represents i.i.d.\ observations and $\G_n$ is the general empirical process constructed from $\bm X_n$, several examples of possible bootstrap replicates of $\G_n$ can for instance be found in \citet[Section~3.6]{VanWel96}. As in Section~3.6 of the latter reference, we assume throughout this section that the underlying probability space is independent of $n$ and has a product structure, that is, $\Omega=\Omega_0 \times \Omega_1 \times \cdots$ with probability measure $\Pr=\Pr_0 \otimes \Pr_1\otimes \cdots$, where $\Pr_i$ denotes the probability measure on~$\Omega_i$, such that, for any $\omega \in \Omega$, $\bm X_n(\omega)$ only depends on the first coordinate of $\omega$ and $\bm W_n^{\scs (i)}(\omega)$ only depends on the $(i+1)$-coordinate of $\omega$, implying in particular that $\bm X_n, \bm W_n^{\scs (1)}, \bm W_n^{\scs (2)}, \dots$ are independent.  

Some additional notation is needed before our main result can be stated. For any map $Z:\Omega \to \R$,  let $Z^*$ be any \emph{minimal measurable majorant of $Z$ with respect to $\Pr$}, that is, $Z^*:\Omega \to [-\infty,\infty]$ is measurable, $Z^* \ge Z$ and $Z^* \le U$ almost surely for any measurable function $U:\Omega \to [-\infty,\infty]$ with $U \ge Z$ almost surely. A \emph{maximal measurable minorant of $Z$ with respect to $\Pr$} is denoted by $Z_*$ and defined by $Z_* = -(-Z)^*$ \citep[see][Section~1.2]{VanWel96}. Furthermore, for any $i \in \{0,1,\dots\}$, we define the map $Z^{i*}:\Omega \to [-\infty,\infty]$ such that, for any $(\omega_0,\dots,\omega_{i-1},\omega_{i+1},\dots) \in \Omega_0 \times \dots \Omega_{i-1} \times \Omega_{i+1} \times \cdots$, the map $\omega_i \mapsto Z^{i*}(\omega_0,\dots,\omega_{i-1},\omega_i,\omega_{i+1},\dots)$ is a minimal measurable majorant of  $\omega_i \mapsto Z(\omega_0,\dots,\omega_{i-1},\omega_i,\omega_{i+1},\dots)$ with respect to $\Pr_i$. Finally, for a real-valued function $Y$ on $\mathcal X_n \times \mathcal W_n$ such that $\bm w \mapsto Y(\bm x, \bm w)$ is measurable for all $\bm x \in \X_n$, we further use the notation
\[
 \Ex (Y \mid \bm X_n) = \int_{\mathcal W_n} Y(\bm X_n, \bm w) \, d \Pr^{\bm W_n^{(i)}}(\bm w),
\]
provided the integral exists. Note that if $Y$ is jointly Borel measurable, the right-hand side of the last displays defines a version of the conditional expectation of $Y$ given $\bm X_n$, whence the notation.

\begin{lem} 
\label{lem:proc}
With the previous notation and under the above assumptions, the following three assertions are equivalent:
\begin{compactenum}[(a)]
\item
As $n\to\infty$,
\begin{equation}
\label{eq:jointweakconv2}
(\bm \G_n, \bm \G_{n}^{(1)}, \bm \G_{n}^{(2)}) \leadsto (\bm \G, \bm \G^{(1)}, \bm \G^{(2)}) \qquad \text{in } \{\ell^\infty(T)\}^{3},
\end{equation}
where $\bm \G, \bm \G^{(1)},  \bm \G^{(2)}$  are i.i.d.
\item
For any $M\ge 2$,  as $n\to\infty$,
\begin{equation}
\label{eq:jointweakconvM}
(\bm \G_n, \bm \G_{n}^{(1)}, \dots, \bm \G_{n}^{(M)}) \leadsto (\bm \G, \bm \G^{(1)}, \dots, \bm \G^{(M)}) \qquad \text{in } \{\ell^\infty(T)\}^{M+1},
\end{equation}
where $\bm \G, \bm \G^{(1)}, \dots, \bm \G^{(M)}$  are i.i.d.

\item  As $n\to\infty$,
\begin{equation}
\label{eq:BL1conv}
\sup_{h \in \BL_1(\ell^\infty(T))} \Big| \Ex \{ h(\G_n^{(1)})^{1*} \mid \bm X_n \} - \Ex \{ h(\G) \} \Big| \op 0,
\end{equation}
and $\G_n^{\scs (1)}$ is asymptotically measurable, where $\op$ denotes convergence in outer probability. 
\end{compactenum}
\end{lem}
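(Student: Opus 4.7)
The plan is to prove the cycle $(b)\Rightarrow(a)\Rightarrow(c)\Rightarrow(b)$, mirroring the proof of Lemma~\ref{lem:uncond:cond2} but replacing ordinary expectations and laws by outer expectations and minimal measurable majorants, and exploiting the product-space structure of $\Omega$ throughout. The implication $(b)\Rightarrow(a)$ is immediate by projection.

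For $(a)\Rightarrow(c)$, I would fix $h\in\BL_1(\ell^\infty(T))$ and prove the $L^2$-type bound
$$
\Ex\big[\Ex\{h(\G_n^{(1)})^{1*}\mid\bm X_n\}-\Ex h(\G)\big]^2\longrightarrow 0.
$$
The key observation is that, under the product structure of $\Omega$, $h(\G_n^{(i)})^{i*}$ depends only on $(\omega_0,\omega_i)$, so $h(\G_n^{(1)})^{1*}$ and $h(\G_n^{(2)})^{2*}$ are identically distributed and conditionally independent given $\bm X_n$. Fubini therefore yields
$$
\Ex\big[\Ex\{h(\G_n^{(1)})^{1*}\mid\bm X_n\}\big]^2=\Ex\{h(\G_n^{(1)})^{1*}\,h(\G_n^{(2)})^{2*}\}=\Ex^*\{h(\G_n^{(1)})h(\G_n^{(2)})\},
$$
the last identity being the standard relation between outer expectations on a product space and iterated minimal measurable majorants taken in different coordinates (\citealp{VanWel96}, Sections~1.2 and~1.4). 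Applying $(a)$ to the bounded Lipschitz map $(g_1,g_2)\mapsto h(g_1)h(g_2)$ gives the limit $(\Ex h(\G))^2$; the other terms in the expansion of the square follow from marginal weak convergence of $\G_n^{(1)}$, which is itself a consequence of $(a)$. The asymptotic measurability required in $(c)$ is inherited from that same marginal convergence to the tight Borel-measurable limit $\G$. The passage from convergence in outer probability for each fixed $h\in\BL_1$ to the supremum over $\BL_1$ in~\eqref{eq:BL1conv} exploits the tightness of $\G$: it concentrates on a separable subspace of $\ell^\infty(T)$ on which the restriction of $\BL_1$ is totally bounded in supremum norm, allowing the supremum to be approximated by a finite maximum up to arbitrarily small error.

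For $(c)\Rightarrow(b)$, the characterization of Hoffmann-J{\o}rgensen weak convergence via outer expectations of bounded Lipschitz functions reduces the task to joint asymptotic tightness together with
$$
\Ex^*\Big\{h_0(\G_n)\prod_{j=1}^{M}h_j(\G_n^{(j)})\Big\}\longrightarrow\Ex h_0(\G)\prod_{j=1}^{M}\Ex h_j(\G)
$$
for arbitrary $h_0,\dots,h_M\in\BL_1(\ell^\infty(T))$. Joint asymptotic tightness reduces to marginal tightness coordinate-wise; $\G_n$ is tight by assumption, and $(c)$ together with asymptotic measurability of $\G_n^{(j)}$ implies $\G_n^{(j)}\leadsto\G$ in $\ell^\infty(T)$ for each~$j$. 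For the outer expectation of the product, iterated Fubini on $\Omega_0\times\cdots\times\Omega_M$ lets me rewrite the left-hand side as
$$
\Ex\Big[h_0(\G_n)^{0*}\prod_{j=1}^{M}\Ex\{h_j(\G_n^{(j)})^{j*}\mid\bm X_n\}\Big],
$$
using the conditional independence of $\G_n^{(1)},\dots,\G_n^{(M)}$ given $\bm X_n$ implied by the product structure. By $(c)$, each conditional expectation converges in outer probability to the constant $\Ex h_j(\G)$, while $\Ex^*h_0(\G_n)\to\Ex h_0(\G)$ by the marginal weak convergence of $\G_n$; bounded dominated convergence for convergence in probability closes the argument.

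The chief technical difficulty in this program is the careful bookkeeping of the Fubini-type identities linking outer expectations to iterated minimal measurable majorants on the product space, needed in both $(a)\Rightarrow(c)$ and $(c)\Rightarrow(b)$. Because none of the maps $\G_n,\G_n^{(1)},\G_n^{(2)},\dots$ is assumed measurable into $\ell^\infty(T)$, each such manipulation has to be justified via the outer-integral machinery of \citet[Sections~1.2, 1.4 and~1.12]{VanWel96}, taking care that minimal majorants taken in different coordinates compose in the correct order. A secondary, essentially routine, point is the reduction from convergence in outer probability for individual $h\in\BL_1$ to the uniform convergence in~\eqref{eq:BL1conv}, which again exploits the tightness of $\G$ and its concentration on a separable subspace.
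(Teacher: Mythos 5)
Your cycle $(b)\Rightarrow(a)\Rightarrow(c)\Rightarrow(b)$ matches the paper's, and $(b)\Rightarrow(a)$ is indeed trivial. The genuine gap is in $(a)\Rightarrow(c)$, where you try to lift the $L^2$ argument from Lemma~\ref{lem:uncond:cond2} directly to the process level.

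Two obstructions. First, $\Ex\{h(\G_n^{(1)})^{1*}\mid\bm X_n\}$ is in general not a measurable function of $\omega_0$ (this is the very reason Assertion~$(c)$ is stated with $\op$ rather than $\p$). You can therefore only speak of the \emph{outer} expectation of its squared deviation from $\Ex\, h(\G)$, and the expansion of that square fails because $\Ex^*$ is subadditive but not additive, while squaring destroys the monotonicity that the minimal-majorant calculus of \citet[Section~1.2]{VanWel96} relies on (so that $\{(f-c)^2\}^*$ is not $\{(f-c)^*\}^2$ for maps $f$ of indefinite sign). Your Fubini identity equating $\Ex\{h(\G_n^{(1)})^{1*}\,h(\G_n^{(2)})^{2*}\}$ with $\Ex^*\{h(\G_n^{(1)})h(\G_n^{(2)})\}$ likewise presupposes joint measurability of the product of the two coordinate-wise majorants, which need not hold; the outer-Fubini results in \cite{VanWel96} give inequalities, not equalities, at this level of generality.

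Second, you describe the passage from fixed-$h$ convergence to the supremum over $\BL_1(\ell^\infty(T))$ as ``essentially routine,'' invoking total boundedness of $\BL_1$ restricted to a compact on which $\G$ concentrates. That concentration does not by itself control the conditional mass that $\G_n^{(1)}$ puts outside an enlargement of such a compact: for that one needs the asymptotic equicontinuity of $\G_n^{(1)}$ (a consequence of~$(a)$), applied conditionally on $\bm X_n$. This is exactly what the paper's three-term decomposition $I_n(\ell)+J_n(\ell)+K(\ell)$ accomplishes: it discretizes $T$ via total boundedness of $(T,\rho)$ first, controls $I_n(\ell)$ and $K(\ell)$ by asymptotic equicontinuity and tightness, and only then invokes Lemma~\ref{lem:uncond:cond} (whose clean $L^2$ argument lives in separable $\R^k$, where none of the measurability issues arise) for the genuinely conditional piece $J_n(\ell)$. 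Your proposal inverts this order and ends up fighting measurability and uniformity simultaneously. For $(c)\Rightarrow(b)$ your overall strategy (finite-dimensional distributions plus marginal tightness) is the right one, but the paper again derives the fdd convergence by projecting onto $\R^{(M+1)k}$ and appealing to Lemma~\ref{lem:uncond:cond}, rather than by performing an outer Fubini of $\BL_1$ products over the whole process; you should do the same to avoid the measurability problems resurfacing.
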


\noindent Let us make a few comments on this result: 
\begin{compactitem} 
\item Assertion~$(c)$ is the extension put forward by \cite{GinZin90} of the conditional formulation of bootstrap consistency in a separable metric space $\Db$ to the non-necessarily separable space $\ell^\infty(T)$. Section~3.6 in \cite{VanWel96} and Chapter~10 in \cite{Kos08} in particular provide proofs of Assertion~$(c)$ for various bootstraps of the general empirical process constructed from i.i.d.\ observations along with continuous mapping theorems \emph{for the bootstrap} and a functional delta method \emph{for the bootstrap} that can be used to transfer \eqref{eq:BL1conv} to the statistic level in certain situations.

\item In \cite{VanWel96}, \cite{Van98} and \cite{Kos08}, the expression on the left-hand side of \eqref{eq:BL1conv} appears without the minimal measurable majorant with respect to the ``weights''. This is a consequence of the fact that, for all the resampling schemes considered in these monographs, the function $\bm w \mapsto \G_n^{\scs (1)}(\bm x, \bm w)$ is continuous for all $\bm x \in \X_n$, implying that $\bm w \mapsto h\{\G_n^{\scs (1)}(\bm x, \bm w)\}$ is measurable for all $\bm x \in \X_n$ and all $h \in \BL_1(\ell^\infty(T))$. However, the minimal measurable majorant  becomes for instance necessary if one wishes to apply Lemma~\ref{lem:proc}  to certain stochastic processes appearing when using the \emph{parametric bootstrap} (e.g., for goodness-of-fit testing, see, \citealp{StuGonPre93,GenRem08}). To see this, suppose that $\bm X_n$ is an i.i.d.\ sample of size $n$ from some d.f.\ $G$ on the real line, with $G$ from some parametric family $\{G_\theta\}$. A natural stochastic process, from which one may for instance construct classical goodness-of-fit statistics, is then $\G_n(t) = \sqrt{n} \{G_n(t) - G(t)\}$, $t \in \R$, where $G_n$ is the empirical d.f.\ of $\bm X_n$. Bootstrap samples are generated by sampling from $G_{\theta_n}$, where $\theta_n = \theta_n(\bm X_n)$ is an estimator of $\theta$. 
Note in passing that the latter way of proceeding is compatible with the product-structure condition on the underlying probability space since bootstrap samples can equivalently be regarded as obtained by applying $G_{\scs \theta_n}^{-1}$ component-wise to independent random vectors $\bm W_n^{\scs (1)},\bm W_n^{\scs (2)},\dots$ independent of $\bm X_n$ and whose components are i.i.d.\ standard uniform.  Now, corresponding parametric bootstrap replicates of $\G_n$ are given by $\G_n^{\scs (i)} = \sqrt{n} (G_n^{\scs (i)} -G_n)$, where $G_n^{\scs (i)}$ is the empirical d.f.\ of the sample $(G_{\theta_n}^{\scs -1}(W_{n1}^{\scs (i)}),\dots,G_{\theta_n}^{\scs -1}(W_{nn}^{\scs (i)}))$.
The need for the minimal measurable majorant with respect to the ``weights'' in~\eqref{eq:BL1conv} is then a consequence of the fact that the function from $\R^n$ to~$\R$ defined by
$$
\bm w^{\scs(i)} \mapsto h \{ \G_n^{(i)} (\bm x,\bm w^{\scs(i)}) \} = h \bigg( \frac{1}{\sqrt{n}} \sum_{j=1}^n \left[ \1 \{G_{\theta_n(\bm x)}^{-1}(w_j^{\scs(i)}) \leq \cdot \} - \1(x_j \le \cdot) \right] \bigg)
$$ 
is not measurable for all $h \in \BL_1(\ell^\infty(\R))$ and all $\bm x \in \X_n$, as can for instance be verified by adapting arguments from \citet[Section~15]{Bil99}.

\item Bootstrap asymptotic validity in the form of Assertions~$(a)$ or~$(b)$ is less frequently encountered in the literature, although, as discussed in the introduction, it may be argued that this unconditional formulation is more intuitive and easy to work with. It is proved for example in  \cite{GenRem08} (for $M=1$), \cite{RemSca09}, \cite{Seg12}, \cite{GenNes14}, \cite{BerBuc17} and \cite{BucKoj16, BucKoj16b}, among many others, for various stochastic processes arising in statistical tests on copulas or for assessing stationarity.

\item As mentioned in the introduction, note that Assertions~$(b)$ and~$(c)$ are known to be equivalent for the special case of the \emph{multiplier CLT} for the general empirical process based on i.i.d.\ observations and, in this case, it is even sufficient to consider $M=1$ in~$(b)$: Corollary 2.9.3 in \cite{VanWel96} corresponds to Assertion~$(b)$, while Theorem~2.9.6 corresponds to Assertion~$(c)$. The equivalence between the two follows by combining Theorem~2.9.6 with Theorem~2.9.2.  
\end{compactitem}

Before proving Lemma~\ref{lem:proc}, we provide a useful corollary which is an immediate consequence of Lemma~\ref{lem:proc} and Lemma~\ref{lem:uncond:cond}. It may be regarded as an analogue of Theorem 1.5.4 in \cite{VanWel96} in a conditional setting and, roughly speaking, states that conditional weak convergence of a sequence of stochastic processes is equivalent to the conditional weak convergence of finite-dimensional distributions and (unconditional) asymptotic tightness.

\begin{cor}
Suppose that the assumptions of Lemma~\ref{lem:proc} are met. Then, any of the equivalent assertions in that lemma is equivalent to the fact that the finite dimensional distributions of $\G_n^{\scs (1)}$ conditionally weakly converge to those of $\G$ in probability, that is, for any $k\in\N$ and $s_1, \dots, s_k\in T$, 
\begin{align}
\label{eq:dBL:fidi}
d_\BL\left(\Pr^{(\G_n^{(1)} (s_1), \dots,  \G_n^{(1)} (s_k)) \mid \bm X_n}, \Pr^{(\G (s_1), \dots, \G (s_k))} \right)  
\p 0
\end{align}
as $n\to \infty$, and  that $\G_n^{\scs (1)}$ is (unconditionally) asymptotically tight.
\end{cor}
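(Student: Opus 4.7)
The plan is to use Assertion~(a) of Lemma~\ref{lem:proc} as the pivot and to combine Theorem~1.5.4 in \cite{VanWel96} (weak convergence in $\ell^\infty(T)$ is equivalent to finite-dimensional convergence plus asymptotic tightness) with the $\R^k$-valued equivalences already established in Lemma~\ref{lem:uncond:cond}. The crucial observation is that, for any fixed $s_1,\dots,s_k \in T$, the $\R^k$-valued random vectors $\bm S_n := (\G_n(s_j))_{j=1}^k$ and $\bm S_n^{(i)} := (\G_n^{(i)}(s_j))_{j=1}^k$ for $i=1,2$ satisfy Condition~\ref{cond:Db}: $\R^k$ is Polish so the regular conditional distribution of $\bm S_n^{(1)}$ given $\bm X_n$ exists, and the product structure of $\Omega$ makes $\bm S_n^{(1)}$ and $\bm S_n^{(2)}$ conditionally independent given $\bm X_n$. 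Moreover $\bm S_n \leadsto (\G(s_j))_j$ by the continuous mapping theorem, so Lemma~\ref{lem:uncond:cond} genuinely applies.

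For the forward direction I would start from Assertion~(a) of Lemma~\ref{lem:proc}. Theorem~1.5.4 in \cite{VanWel96} then yields joint asymptotic tightness of $(\G_n,\G_n^{(1)},\G_n^{(2)})$, hence of the marginal $\G_n^{(1)}$, together with joint fidi convergence of the triple to three independent copies of $\G$. Applied to $\bm S_n$ and its replicates $\bm S_n^{(i)}$, the fidi statement is exactly Assertion~(a) of Lemma~\ref{lem:uncond:cond}, so its Assertion~(c) gives $d_\BL(\Pr^{\bm S_n^{(1)}\mid \bm X_n}, \Pr^{\bm S_n}) \p 0$. Combining with $d_\BL(\Pr^{\bm S_n}, \Pr^{(\G(s_j))_j}) \to 0$ via Lemma~\ref{lem:bl1} and the triangle inequality then delivers \eqref{eq:dBL:fidi}.

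For the converse, I would assume \eqref{eq:dBL:fidi} and unconditional asymptotic tightness of $\G_n^{(1)}$. Because the $\bm W_n^{(i)}$ are identically distributed and independent of $\bm X_n$, $\G_n^{(2)}$ has the same outer law as $\G_n^{(1)}$ and is therefore asymptotically tight as well; $\G_n$ is asymptotically tight because $\G_n \leadsto \G$. Joint asymptotic tightness of the triple then follows from tightness of the three marginals by a standard product argument (Lemma~1.4.3 in \cite{VanWel96}). Reading the triangle-inequality step of the first direction in reverse recovers Assertion~(c) of Lemma~\ref{lem:uncond:cond} for every finite set of indices $s_1,\dots,s_k$, hence Assertion~(a) of that lemma, which is precisely joint fidi weak convergence of $(\G_n,\G_n^{(1)},\G_n^{(2)})$ to three independent copies of $\G$. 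The converse half of Theorem~1.5.4 in \cite{VanWel96} then produces Assertion~(a) of Lemma~\ref{lem:proc}.

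The step I expect to require the most care is the tightness bookkeeping for the non-measurable maps: confirming that outer-law equality transfers asymptotic tightness from $\G_n^{(1)}$ to $\G_n^{(2)}$, and that joint asymptotic tightness of the triple follows from tightness of its three coordinates in the Hoffmann--J{\o}rgensen sense. The analytic core, namely translating the conditional fidi statement into the joint unconditional one via Lemma~\ref{lem:uncond:cond}, is essentially routine once Condition~\ref{cond:Db} has been verified in the $\R^k$-valued setting.
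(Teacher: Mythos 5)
Your proposal is correct and follows essentially the route the paper has in mind when it calls the corollary ``an immediate consequence of Lemma~\ref{lem:proc} and Lemma~\ref{lem:uncond:cond}'': pivot on Assertion~(a) of Lemma~\ref{lem:proc}, decompose joint weak convergence in $\{\ell^\infty(T)\}^3$ into fidi convergence plus joint asymptotic tightness via Theorem~1.5.4 of van der Vaart and Wellner, and translate between unconditional fidi convergence and the conditional statement~\eqref{eq:dBL:fidi} via Lemma~\ref{lem:uncond:cond}, the triangle inequality, and Lemma~\ref{lem:bl1}. The bookkeeping you flag (verifying Condition~\ref{cond:Db} for the finite-dimensional projections, transferring tightness to $\G_n^{(2)}$ via equality of outer laws, and passing from marginal to joint asymptotic tightness by Lemma~1.4.3) is handled correctly.
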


\begin{proof}[Proof of Lemma~\ref{lem:proc}] 
We closely follow the proof of Theorem 2.9.6 of \cite{VanWel96} and rely on Lemma~\ref{lem:uncond:cond} when necessary.

$(b) \Rightarrow (a)$: trivial.

$(a)\Rightarrow (c)$: Asymptotic measurability  of $\G_n^{\scs (1)}$ is an immediate consequence of the weak con\-vergence of $\G_n^{\scs (1)}$ to $\G^{(1)}$ in $\ell^\infty(T)$ \citep[Lemma 1.3.8]{VanWel96}. Next, by Theorems 1.5.4 and 1.5.7 in \cite{VanWel96}, the latter convergence implies that there exists a semimetric $\rho$ on $T$ such that $(T,\rho)$ is totally bounded and such that, for any $\eps > 0$,
\begin{align} 
\label{eq:tight}
\lim_{\delta \downarrow 0}\limsup_{n\to\infty} \Pr^* \Big\{ \sup_{\rho(s,t)<\delta} |\G_n^{(1)}(s)- \G_n^{(1)}(t)|> \eps \Big\} = 0.
\end{align}
Fix $\ell\in\N$. For any $s \in T$, let $B(s, 1/\ell) = \{t \in T : \rho(s,t) < 1/\ell \}$ denote the ball of radius $1/\ell$ centered at $s$. Since $(T,\rho)$ is totally bounded, there exists $k=k(\ell) \in \N$ and $s_i = s_i(\ell)\in T$, $i \in \{1,\dots,k\}$, such that $T$ is included in the union of all balls $B(s_i, 1/\ell)$, $i \in \{1,\dots,k\}$. The latter allows us to define a mapping $\Pi_\ell:T \to T$ defined, for any $s \in T$, by $\Pi_\ell(s) = s_{i^*}$ where $s_{i^*}$ is the center of a ball containing $s$. 
Now, to prove~\eqref{eq:BL1conv}, we consider the decomposition 
$$
\sup_{h \in \BL_1(\ell^\infty(T))} \Big |\Ex \{ h(\G_n^{(1)})^{1*} \mid \bm X_n \} - \Ex \{ h(\G) \} \Big| \leq I_n(\ell) + J_n(\ell) + K(\ell), \qquad \ell \in \N, 
$$
where
\begin{align*}
I_n(\ell) &= \textstyle \sup_{h \in \BL_1(\ell^\infty(T))} \Big|  \Ex\{ h(\G_n^{(1)})^{1*} \mid \bm X_n\} - \Ex\{ h( \G_n^{(1)}  \circ \Pi_\ell )^{1*} \mid \bm X_n\}  \Big|, \\
J_n(\ell) &=\textstyle \sup_{h \in \BL_1(\ell^\infty(T))} \Big|\Ex\{ h( \G_n^{(1)}  \circ \Pi_\ell )^{1*} \mid \bm X_n\} - \Ex\{ h( \G \circ \Pi_\ell ) \} \Big|, \\
K(\ell) &= \textstyle\sup_{h \in \BL_1(\ell^\infty(T))} \Big| \Ex\{ h( \G \circ \Pi_\ell ) \} - \Ex\{ h(\G)  \} \Big|.
\end{align*}
Some thought reveals that~\eqref{eq:BL1conv} is proved if, for any $\eps > 0$, 
\begin{equation}
\label{eq:limIn}
\lim_{\ell\to\infty} \limsup_{n\to\infty} \Pr^* \left\{ I_n(\ell) > \eps \right\} = 0,
\end{equation}
and similarly for $J_n(\ell)$ and $K(\ell)$.

\emph{Term $I_n(\ell)$:} By Markov's inequality for outer probabilities (Lemma 6.10 in \citealp{Kos08}), it suffices to show \eqref{eq:limIn} with  $\Pr^* \left\{ I_n(\ell) > \eps \right\}$ replaced by $\Ex^* \{ I_n(\ell) \}$. For any $\ell \in \N$, we have, by Lemma~1.2.2 (iii) in \cite{VanWel96},
\begin{align*}
I_n(\ell) &\le \sup_{h \in \BL_1(\ell^\infty(T))} \Ex \Big\{ |  h( \G_n^{(1)}  \circ \Pi_\ell ) - h( \G_n^{(1)}) |^{1*} \mid \bm X_n  \Big\} \\
&\le 
\Ex\Big[ \Big\{ \sup_{s\in T} |  \G_n^{(1)}  \circ \Pi_\ell (s) - \G_n^{(1)} (s) | \wedge 1 \Big\}^* \mid \bm X_n\Big]  \le  \Ex\Big\{ L_n(\ell)^*  \mid \bm X_n\Big\},
\end{align*}
where $\wedge$ denotes the minimum operator and $L_n(\ell) = \sup_{\rho(s,t) < 1/\ell} |\G_n^{\scs (1)}(s)- \G_n^{\scs (1)}(t)| \wedge 1$. It follows that $\Ex^* \{ I_n(\ell) \} \leq \Ex \{ L_n(\ell)^* \}$. Note that, by Lemma 1.2.2 (viii) in \cite{VanWel96}, we may choose $L_n(\ell)^*$ in such a way that $\ell \mapsto L_n(\ell)^*$ is nonincreasing almost surely. Then $\ell \mapsto \Pr \{ L_n(\ell)^* > \eps\}$ is nonincreasing as well, and  from~\eqref{eq:tight} and Problem 2.1.5 in \citet[see also Section 2.1.2]{VanWel96}, we have that $L_n(\ell_n)^* \to 0$ in probability as $n \to \infty$ for any sequence $\ell_n \to \infty$, 
which, by dominated convergence for convergence in probability, implies that $\Ex \{ L_n(\ell_n)^* \} \to 0$. Hence, $\lim_{\ell\to\infty} \limsup_{n\to\infty} \Ex[L_n(\ell)^*]=0$ by invoking Problem~2.1.5 in \cite{VanWel96} again.

\emph{Term $J_n(\ell)$:} Fix $\ell \in \N$ and recall that the centers of the balls defining $\Pi_\ell$ were denoted by $s_1,\dots,s_k$. Since the weak convergence stated in~\eqref{eq:jointweakconv2} implies weak convergence of the respective finite dimensional distributions, we may invoke the equivalence between $(a)$ and $(c)$ in Lemma~\ref{lem:uncond:cond} to conclude (with the help of the triangular inequality and Lemma~\ref{lem:bl1}) that~\eqref{eq:dBL:fidi} holds, that is, that
\begin{equation}
\label{eq:Bn}
B_n(s_1,\dots,s_k) = \sup_{h\in \BL_1(\R^k)} \Big| \Ex[ h \{ \G_{n}^{(1)} (s_1), \dots,  \G_{n}^{(1)} (s_k) \} \mid \bm X_n]- \Ex[ h \{ \G (s_1), \dots,  \G(s_k) \} ] \Big | \p 0.
\end{equation}
Next, let $h \in \BL_1(\ell^\infty(T))$ be arbitrary. Define $f: \R^k \to \ell^\infty(T)$ such that, for any $\bm x \in \R^k$ and $s \in T$, $f(\bm x)(s) = x_i$ if $\Pi_\ell (s) = s_i$. Furthermore, let $g: \R^k \to \R$ be defined as $g(\bm x) = h ( f(\bm x) )$ implying that $h( \G \circ \Pi_\ell) = g(\G(s_1), \dots, \G(s_k))$. Some thought reveals that $g \in \BL_1(\R^k)$, whence $J_n(\ell) \le B_n(s_1,\dots,s_k) \to 0$ in probability as $n\to\infty$ for all $\ell \in \N$, implying the analogue of~\eqref{eq:limIn} for $J_n(\ell)$.

\emph{Term $K(\ell)$:} For any $\ell \in \N$, we have 
\[
K(\ell) \le \Ex \Big\{ \sup_{s\in T} |  \G \circ \Pi_\ell (s) - \G(s) |\wedge 1 \Big\}
\le 
\Ex \Big\{ \sup_{\rho(s,t) < 1/\ell} |\G(s) - \G(t)| \wedge 1 \Big\}.
\]
By tightness of $\G$, Addendum 1.5.8 in \cite{VanWel96}  and dominated convergence, the expectation on the right converges to zero as $\ell \to \infty$, implying the analogue of~\eqref{eq:limIn} for $K(\ell)$.

$(c)\Rightarrow (b)$:
To prove~\eqref{eq:jointweakconvM}, we need to show the weak convergence of the finite-dimensional distributions and marginal asymptotic tightness. We start with the former. 
Let $M,k \in \N$ and $s_1,\dots,s_k \in T$. It suffices to show that, as $n\to\infty$,
\begin{multline} \label{eq:fidi1}
\big(\G_n(s_1), \dots, \G_n(s_k), \G_n^{(1)}(s_1), \dots, \G_n^{(1)}(s_k), \dots, \G_n^{(M)}(s_1), \dots, \G_n^{(M)}(s_k)\big) \\
\leadsto
\big(\G(s_1), \dots, \G(s_k), \G^{(1)}(s_1), \dots, \G^{(1)}(s_k), \dots, \G^{(M)}(s_1), \dots, \G^{(M)}(s_k)\big)
\end{multline}
in $\R^{(M+1)k}$.
Now, for any $g\in \BL_1(\R^k)$, the function $h:\ell^\infty(T) \to \R$ defined by $h(f) = g(f(s_1), \dots, f(s_k))$ is an element of  $\BL_1(\ell^\infty(T))$. From~\eqref{eq:BL1conv}, we then obtain that~\eqref{eq:Bn} holds, or, equivalently, that~\eqref{eq:dBL:fidi} holds. We may hence invoke the equivalence between $(b)$ and $(c)$ in Lemma~\ref{lem:uncond:cond} to obtain~\eqref{eq:fidi1}.

It remains to show marginal tightness. Since $\G_n \leadsto \G$ in $\ell^\infty(T)$ and $\G_n^{\scs (1)},\dots,\G_n^{\scs (M)}$ are identically distributed, it is sufficient to show that $\G_n^{\scs (1)} \leadsto \G^{\scs (1)}$ in $\ell^\infty(T)$. Then, as in the proof of Theorem~2.9.6 of \cite{VanWel96}, for any $h \in \BL_1(\ell^\infty(T))$,
\begin{multline*}
| \Ex^* \{ h( \G_n^{(1)} ) \} - \Ex \{ h( \G^{(1)} ) \} |  
\leq 
\left| \Ex \Big[ \Ex \{  h( \G_n^{(1)} )^* \mid \bm X_n \} \Big] - \Ex^* \Big[ \Ex \{ h( \G_n^{(1)} )^{1*} \mid \bm X_n \} \Big] \right|  \\
+  
\left| \Ex^* \Big[ \Ex \{ h( \G_n^{(1)} )^{1*} \mid \bm X_n \}  - \Ex \{ h( \G^{(1)} ) \} \Big] \right|.
\end{multline*}
By dominated convergence for convergence in outer probability and~\eqref{eq:BL1conv}, the second term converges to zero. Since $h( \G_n^{\scs (1)} )^{1*} \ge \{h( \G_n^{\scs (1)} )_*\}^{1*} =h( \G_n^{\scs (1)} )_*$ almost surely,  the first term is bounded above by
\[
 \Ex \Big[ \Ex \{  h( \G_n^{(1)} )^* \mid \bm X_n \} \Big] - \Ex \Big[ \Ex \{  h( \G_n^{(1)} )_* \mid \bm X_n \} \Big] =  \Ex \{ h( \G_n^{(1)} )^* \} -  \Ex \{ h( \G_n^{(1)} )_* \}.
\]
The latter expression converges to zero since $\G_n^{\scs (1)}$ is assumed asymptotically measurable. The assertion follows from the Portmanteau Theorem \citep[see, e.g.,][Theorem 1.3.4~(i) and~(vii)]{VanWel96}.
\end{proof}

\section{Validity of bootstrap-based confidence intervals and tests}
\label{sec:conf:int:tests}

Whether the consistency of a resampling scheme is shown at the stochastic process level and then transferred to $\Db = \R^d$ or is directly proved at the statistic level, one naturally expects corresponding bootstrap-based confidence intervals and tests to be asymptotically valid. Specifically, the latter amounts to verifying that confidence intervals have the correct asymptotic coverage and that tests maintain their level asymptotically. To formally establish these expected consequences, in this section, we restrict ourselves to the classical situation of a real-valued statistic whose weak limit has a continuous distribution function.

\begin{cond}[$\R$-valued resampling mechanism] \label{cond:R}
Assume that Condition~\ref{cond:Db} holds with $\Db = \R$ and that, additionally, $\bm S_n$ converges weakly to a random variable $\bm S$ with continuous d.f.\ $F$.
\end{cond}

A result in the desired direction is for example Lemma~23.3 in \cite{Van98} and more specialized and deeper results are for instance collected in \citet[Sections~3.3 and 3.4]{Hor01}. Most results of that type do not however take into account the necessary approximation of the unobservable conditional d.f.\ of a bootstrap replicate by the empirical d.f.\ of a sample of bootstrap replicates. The following simple lemma does so and thus allows one to easily verify the asymptotic validity of bootstrap-based confidence intervals and tests constructed from a consistent resampling scheme in the sense of Lemma~\ref{lem:uncond:cond}.

As we continue, for $n,M \in \N$ and $x \in \R$, we use the following notation:
\[
F_n^M(x) =  \textstyle \frac1M \sum_{i=1}^M \1(\bm S_n^{(i)} \le x), \qquad 
F_n(x) = \Pr(\bm S_n^{(1)} \le x \mid \bm X_n) \quad \text{and} \quad
F(x) = \Pr(\bm S \le x). 
\]

\begin{lem} \label{lem:level}
Suppose that Condition~\ref{cond:R} is met and that one of the equivalent assertions in Lemma~\ref{lem:uncond:cond} holds. Then, for any $\alpha \in (0,1)$, 
\[
\lim_{n\to\infty} \Pr \{ \bm S_n \ge (F_n)^{-1}(1-\alpha) \} = \alpha \quad \text{ and } \quad \lim_{n,M\to\infty} \Pr \{ \bm S_n \ge (F_n^M)^{-1}(1-\alpha) \} = \alpha,
\]
where $G^{-1}$ denotes the generalized inverse of d.f.\ $G$, that is $G^{-1}(y) = \inf\{ x \in \R : G(x) \geq y \}$, $y \in (0,1]$. The statements with `$\ge$' replaced by `$>$' in the previous display hold as well.
\end{lem}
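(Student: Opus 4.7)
The plan is to first upgrade assertions $(e)$ and $(f)$ of Lemma~\ref{lem:uncond:cond}, which are statements about Kolmogorov distance between random laws, into uniform (in $x$) convergence in probability of $F_n$ and $F_n^M$ to the limiting d.f.\ $F$. Indeed, assertion $(e)$ is exactly $\sup_x | F_n(x) - \Pr(\bm S_n \le x) | \p 0$, and since $\bm S_n \leadsto \bm S$ with $F$ continuous, Polya's theorem yields $\sup_x | \Pr(\bm S_n \le x) - F(x) | \to 0$. The triangle inequality then gives $\sup_x | F_n(x) - F(x)| \p 0$, and an analogous argument using assertion $(f)$ gives $\sup_x | F_n^M(x) - F(x)| \p 0$ as $n,M \to \infty$. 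Thus it suffices to prove the following abstract claim: if $G_n$ is a sequence (or net) of random d.f.'s with $\sup_x | G_n(x) - F(x) | \p 0$ and $\bm S_n \leadsto \bm S \sim F$ with $F$ continuous, then $\Pr\{\bm S_n \ge G_n^{-1}(1-\alpha)\} \to \alpha$.

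To prove this abstract claim, fix $\delta \in (0, \min(\alpha, 1-\alpha))$ and, using continuity of $F$, choose $q_1 < q_2$ with $F(q_1) = 1-\alpha - \delta$ and $F(q_2) = 1-\alpha + \delta$. On the event $A_n = \{ \sup_x | G_n(x) - F(x) | < \delta \}$, we have $G_n(q_1) < 1-\alpha < G_n(q_2)$, whence by the definition of the generalized inverse, $q_1 \le G_n^{-1}(1-\alpha) \le q_2$. Consequently, on $A_n$, $\{ \bm S_n \ge q_2 \} \subseteq \{ \bm S_n \ge G_n^{-1}(1-\alpha) \} \subseteq \{ \bm S_n \ge q_1 \}$, so that
\[
\Pr(\bm S_n \ge q_2) - \Pr(A_n^c) \le \Pr\{ \bm S_n \ge G_n^{-1}(1-\alpha) \} \le \Pr(\bm S_n \ge q_1) + \Pr(A_n^c).
\]
Letting $n \to \infty$ (or $n, M \to \infty$ in the second case), the term $\Pr(A_n^c)$ tends to zero by the uniform convergence established above, while $\Pr(\bm S_n \ge q_i) \to 1 - F(q_i)$ for $i = 1,2$ since $F$ is continuous at $q_1, q_2$ (so these are continuity points of the limit). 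This yields $\alpha - \delta \le \liminf \le \limsup \le \alpha + \delta$, and letting $\delta \downarrow 0$ closes the argument.

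For the statements with strict inequality, the same bracketing argument applies verbatim, because continuity of $F$ ensures $\Pr(\bm S_n > q_i) \to 1 - F(q_i)$ as well; equivalently, the difference $\Pr\{\bm S_n = G_n^{-1}(1-\alpha)\}$ is dominated on $A_n$ by $\Pr(\bm S_n \in [q_1, q_2]) \to F(q_2) - F(q_1) = 2\delta$, which is arbitrary.

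The only genuinely delicate point is book-keeping with the generalized inverse applied to a \emph{random} d.f.: one must be careful that the implications $G_n(q_2) > 1-\alpha \Rightarrow G_n^{-1}(1-\alpha) \le q_2$ and $G_n(q_1) < 1-\alpha \Rightarrow G_n^{-1}(1-\alpha) \ge q_1$ hold for every nondecreasing $G_n$, independently of any continuity or strict monotonicity of $F$. Since both implications follow directly from the definition $G^{-1}(y)=\inf\{x : G(x) \ge y\}$, no assumption beyond continuity of $F$ at $q_1, q_2$ is needed, and in particular $F^{-1}$ need not be continuous at $1-\alpha$.
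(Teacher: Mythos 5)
Your proof is correct, and it takes a genuinely different and more streamlined route than the paper's. The paper treats the two assertions separately: for the conditional quantiles it passes to a.s.-convergent subsequences, invokes a quantile-convergence lemma (Lemma~21.2 in van der Vaart) which requires continuity of $F^{-1}$ at $1-\alpha$, and then patches up the at most countably many exceptional $\alpha$ via a monotonicity argument; for the empirical quantiles it converts $\{\bm S_n \ge (F_n^M)^{-1}(1-\alpha)\}$ into $\{F_n^M(\bm S_n) \ge 1-\alpha\}$ via right-continuity and then estimates with an indicator inequality and the continuous mapping theorem. You instead factor both cases through a single abstract sandwich claim: once $\sup_x|G_n(x)-F(x)|\p 0$ (obtained, as in the paper, by combining Assertions~$(e)$/$(f)$ with Polya's theorem), bracket $G_n^{-1}(1-\alpha)$ between two deterministic continuity points $q_1<q_2$ of $F$ chosen via the intermediate value theorem, and let $\delta\downarrow 0$. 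The key observations that $G_n(q_1)<1-\alpha$ forces $G_n^{-1}(1-\alpha)\ge q_1$ and $G_n(q_2)>1-\alpha$ forces $G_n^{-1}(1-\alpha)\le q_2$ hold for any nondecreasing $G_n$, as you note, so no regularity of $F^{-1}$ is needed. Your route avoids the subsequence machinery, the quantile-convergence lemma, and the countability patch entirely, handles the `$\ge$' and `$>$' cases by the same bracketing, and unifies the conditional and empirical statements; the paper's route buys a slightly more self-contained treatment in that it works directly with the conditional quantile $F_n^{-1}(1-\alpha)$ as a random variable converging a.s.\ along subsequences, which some readers may find more transparent. Both are sound.
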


The assertion of this lemma involving conditional quantiles (or a version thereof) is usually provided in textbooks on the bootstrap to validate its use for the construction of confidence intervals and tests (see, e.g., Lemma~23.3 in \citealp{Van98}). For completeness, we shall prove it at the end of this section. The assertion involving empirical quantiles is the one to be used in practice as conditional quantiles are not available and must thus be approximated by Monte Carlo. Note in particular that the above formulation is general enough to {allow} $M=M(n)$ with $M(n) \to \infty$ as $n\to\infty$.

Let us now briefly verify that the asymptotic validity of bootstrap-based confidence intervals and tests is an immediate consequence of the preceding lemma. Start with the former and assume that $\bm S_n = \sqrt{n} (\theta_n - \theta)$, where $\theta_n$ is an estimator of some parameter $\theta \in \R$. Then, a natural confidence interval for $\theta$ is given by 
$$
I_{n,M,\alpha} = \Big[ \theta_n - n^{-1/2} (F_n^M)^{-1}(1-\alpha/2), \theta_n - n^{-1/2} (F_n^M)^{-1}(\alpha/2) \Big], \qquad \alpha \in (0,1/2).
$$
Note in passing that the above confidence interval is related to the so-called \emph{basic bootstrap confidence interval} \citep[see, e.g.,][Chapter 5]{DavHin97}. A consequence of Lemma~\ref{lem:level} is then that, if one of the equivalent assertions in Lemma~\ref{lem:uncond:cond} hold, $I_{n,M,\alpha}$ is of asymptotic level $1-\alpha$ in the sense that, as $n,M \to \infty$, 
$$
\Pr(\theta \in I_{n,M,\alpha}) = \Pr \{ \bm S_n \ge (F_n^M)^{-1}(\alpha/2) \} - \Pr \{ \bm S_n > (F_n^M)^{-1}(1-\alpha/2) \} \to 1 - \alpha.
$$

Let us now discuss the case of bootstrap-based tests. Assume that $\bm S_n$ is a test statistic for some null hypothesis $H_0$ such that large values of $\bm S_n$ provide evidence against $H_0$. It is then natural to reject $H_0$ at level $\alpha \in (0,1)$ when $\bm S_n > (F_n^M)(1 - \alpha)$. Should one of the equivalent assertions in Lemma~\ref{lem:uncond:cond} holds under $H_0$, Lemma~\ref{lem:level} immediately implies that this test holds it level asymptotically in the sense that, under $H_0$, $\Pr \{ \bm S_n \ge (F_n^M)^{-1}(1-\alpha) \} \to \alpha$ as $n,M \to \infty$.  If the bootstrap replicates are stochastically bounded under the alternative, then the test will also be consistent provided $\bm S_n$ converges to infinity in probability under the alternative.  

Under the same setting, another statistic of interest is
\[
p_n^M = \frac1M \sum_{i=1}^M \1(\bm S_n^{(i)} >\bm S_n) = 1 - F_n^M(\bm S_n),
\]
which may be interpreted as an approximate p-value for the test based on $\bm S_n$. The theoretical analogue of the latter is
\[
p_n = \Pr( \bm S_n^{(1)}> \bm S_n \mid \bm X_n) = 1 - F_n(\bm S_n).
\]
Intuitively, the resampling scheme being valid should imply that, under the null hypothesis, the statistics $p_{n}^M$ and $p_n$ are approximately standard uniform. The following result formalizes this.

\begin{cor}
\label{cor:p-values}
Suppose that Condition~\ref{cond:R} is met and that one of the equivalent assertions in Lemma~\ref{lem:uncond:cond} holds. Then, as $n\to\infty$,
\[
p_n \leadsto \mathrm{Uniform}(0,1)  \qquad \text{ and } \qquad  p_n^{M_n} \leadsto \mathrm{Uniform}(0,1),
\]
for any sequence $M_n \to \infty$ as $n\to\infty$.
\end{cor}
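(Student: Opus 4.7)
The plan is to reduce both convergences to the fact that $F(\bm S_n) \leadsto \mathrm{Uniform}(0,1)$ by showing that $p_n$ and $p_n^{M_n}$ each differ from $1 - F(\bm S_n)$ by a term that is negligible in probability. The key inputs are assertions $(e)$ and $(f)$ of Lemma~\ref{lem:uncond:cond}, which give uniform (in $x$) closeness of $F_n$ and $F_n^M$ to the d.f.\ $F_n^{\bm S}(x) = \Pr(\bm S_n \leq x)$ of $\bm S_n$, and the classical fact that $\bm S_n \leadsto \bm S$ together with continuity of $F$ implies $\sup_{x\in\R} |F_n^{\bm S}(x) - F(x)| \to 0$ (P\'olya's theorem).

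First I would note that by Lemma~\ref{lem:uncond:cond}$(e)$, $\sup_{x\in\R} |F_n(x) - F_n^{\bm S}(x)| \overset{\Pr}{\to} 0$, and combining this with P\'olya's theorem via the triangle inequality gives $\sup_{x\in\R} |F_n(x) - F(x)| \overset{\Pr}{\to} 0$. In particular, $|F_n(\bm S_n) - F(\bm S_n)| \leq \sup_x |F_n(x) - F(x)| \overset{\Pr}{\to} 0$, hence $p_n = 1 - F(\bm S_n) + o_\Pr(1)$. Since $F$ is continuous, the continuous mapping theorem applied to $\bm S_n \leadsto \bm S$ yields $F(\bm S_n) \leadsto F(\bm S) \sim \mathrm{Uniform}(0,1)$, and Slutsky's lemma delivers $p_n \leadsto \mathrm{Uniform}(0,1)$.

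For the second statement, I would apply the same scheme with $F_n^{M_n}$ in place of $F_n$. The only additional point is that one must pass from the ``net'' convergence $d_K(\hat \Pr_M^{\bm S_n}, \Pr^{\bm S_n}) \overset{\Pr}{\to} 0$ as $n,M \to \infty$ in Lemma~\ref{lem:uncond:cond}$(f)$ to convergence along the sequence $M = M_n$. This is not automatic for abstract nets, but it is for this particular one: inspecting the proof of Lemma~\ref{lem:uncond:cond2}, the Chebyshev bound~\eqref{eq:cdc} is of order $K/(\eps^2 M)$ and is uniform in $n$, so $d_\BL(\hat\Pr_M^{\bm S_n}, \Pr^{\bm S_n^{(1)}\mid \bm X_n}) \overset{\Pr}{\to} 0$ whenever $M = M_n \to \infty$, regardless of $n$, and combining with~$(c)$ through the triangle inequality (and then switching from $d_\BL$ to $d_K$ via Lemma~\ref{lem:bl1c}) gives $\sup_x |F_n^{M_n}(x) - F_n^{\bm S}(x)| \overset{\Pr}{\to} 0$. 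The rest of the argument is identical: $\sup_x |F_n^{M_n}(x) - F(x)| \overset{\Pr}{\to} 0$, hence $p_n^{M_n} = 1 - F(\bm S_n) + o_\Pr(1) \leadsto \mathrm{Uniform}(0,1)$.

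The only mildly delicate step is the sequential-versus-net issue for $p_n^{M_n}$, but it is resolved simply by noting the uniform-in-$n$ character of the Chebyshev bound already established in the proof of Lemma~\ref{lem:uncond:cond2}. Everything else is a straightforward combination of uniform convergence of distribution functions and Slutsky's lemma.
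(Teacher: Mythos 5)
Your proof is correct and essentially the same as the paper's: use Assertions~$(e)$ and $(f)$ of Lemma~\ref{lem:uncond:cond} together with the uniform convergence in \eqref{eq:unif:SntoS} (P\'olya's theorem) to show that $p_n$ and $p_n^{M_n}$ each equal $1-F(\bm S_n)+o_\Pr(1)$, then conclude via the continuous mapping theorem and Slutsky. The only superfluous step is your extra argument for passing from the net convergence ``as $n,M\to\infty$'' in Assertion~$(f)$ to convergence along $M=M_n$: for the directed set $\N\times\N$ with the coordinatewise order, eventuality for the net means ``for all $n\ge N_0$ and $M\ge M_0$,'' which any sequence with $M_n\to\infty$ eventually satisfies, so the reduction to $M=M_n$ is automatic (though your explicit appeal to the uniform-in-$n$ Chebyshev bound from the proof of Lemma~\ref{lem:uncond:cond2} is of course also a valid justification).
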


The proofs of Lemma~\ref{lem:level} and Corollary~\ref{cor:p-values} are given hereafter.

\begin{proof}[Proof of Lemma~\ref{lem:level}]
Consider the assertion involving conditional quantiles. Notice first that the weak convergence of $\bm S_n$ to $\bm S$, the continuity of $F$ and Lemma~\ref{lem:bl1} imply that
\begin{equation}
\label{eq:unif:SntoS}
d_K(\Pr^{\bm S_n}, \Pr^{\bm S}) = \sup_{x \in \R} |\Pr(\bm S_n \le x) - F(x) | \to 0 \qquad \text{as } n \to \infty.
\end{equation}
Next, combine Assertion~$(e)$ in Lemma~\ref{lem:uncond:cond} with~\eqref{eq:unif:SntoS} to obtain that every subsequence of $d_K(\Pr^{\bm S_n\ex{(1)} \mid \bm X_n}, \Pr^{\bm S})$ has a further subsequence along which this expression converges almost surely to zero as $n \to \infty$.  Let $\alpha \in (0,1)$ such that $F^{-1}$ is continuous at $1-\alpha$. As a consequence of Lemma 21.2 in \cite{Van98}, we obtain that $F_n^{-1}(1-\alpha) \as F^{-1}(1-\alpha)$ along that subsequence. Hence, the random vector $(\bm S_n, F_n^{-1}(1-\alpha))$ converges weakly to $(\bm S, F^{-1}(1-\alpha))$, again along that subsequence. Since $\Pr \{ \bm S = F^{-1}(1-\alpha) \} = 0$ by continuity, the Portmanteau Theorem implies that
\begin{equation}
\label{eq:condquant}
\Pr \{ \bm S_n\ge F_n^{-1}(1-\alpha) \} \to \Pr \{ \bm S \ge F^{-1}(1-\alpha) \} = \alpha
\end{equation}
along that subsequence. The latter equation holds for all expect at most countably many $\alpha \in (0,1)$. Because the left (resp.\ right) side of~\eqref{eq:condquant} is an increasing (resp.\ increasing continuous) function of $\alpha$,~\eqref{eq:condquant} must hold for all $\alpha \in (0,1)$. The first assertion follows since the subsequence we started with was arbitrary. Finally, note that one may replace `$\ge$' by `$>$' in the last display.

Consider the assertion involving empirical quantiles. Let $q_{1-\alpha}$ denote the $(1-\alpha)$-quantile of~$\bm S$. Since $\lim_{n\to \infty} \Pr(\bm S_n\ge q_{1-\alpha} ) = \Pr(\bm S \ge q_{1-\alpha}) = \alpha$ as a consequence of the Portmanteau Theorem and the continuity of $F$, it suffices to show that
\begin{multline*}
  \lim_{n,M \to \infty} |   \Pr \{ \bm S_n \ge  (F_n^M)^{-1}(1-\alpha) \} - \Pr(\bm S_n \ge q_{1-\alpha}) |  \\
  = \lim_{n,M \to \infty} | \Pr \{ F_n^M (\bm S_n) \ge {1-\alpha} \}  
  - \Pr \{ F(\bm S_n) \ge 1-\alpha \}| = 0,
\end{multline*}
where the equality follows from the fact that $F_n^{\scs M}$ and $F$ are right-continuous. Using the fact that, for any $a,b,x\in\R$ and $\eps>0$, $| \1(x \le a) - \1(x \le b) | \le \1(|x-a| \le \eps) +\1(|a-b|> \eps)$, we can estimate
\begin{multline*} 
  | \Pr \{ F_n^M (\bm S_n) \ge {1-\alpha} \} - \Pr \{ F(\bm S_n) \ge 1-\alpha \} |  \\
  \le   \Pr\{ |F(\bm S_n) - 1 + \alpha | \le \eps \} +   \Pr\{ | F(\bm S_n) - F_n^M (\bm S_n) | > \eps \}.
\end{multline*}
By the continuous mapping theorem and the Portmanteau Theorem, the first term on the right converges to $\Pr\{| F(\bm S) - 1+\alpha | \le \eps\}$ as $n \to \infty$, which can be made arbitrary small by decreasing~$\eps$. Combining Assertion~$(f)$ from Lemma~\ref{lem:uncond:cond} with~\eqref{eq:unif:SntoS} immediately implies that the second term converges to zero as $n,M \to \infty$, hence the first claim.

The claim with `$\ge$' replaced by `$>$' follows from the fact that, by continuity of $F$ and the Portmanteau Theorem, for any $x \in \R$, $\Pr(\bm S_n < x) \to \Pr(\bm S < x)$ as $n \to \infty$. The latter convergence can be made uniform by arguments as in Lemma 2.11 in \cite{Van98}, which, combined with~\eqref{eq:unif:SntoS} implies that $\sup_{x \in \R} \Pr(\bm S_n = x)$ converges to zero in probability as $n \to \infty$.
\end{proof}

\begin{proof}[Proof of Corollary~\ref{cor:p-values}] 
 The weak convergence $\bm S_n \leadsto \bm S$ as $n \to \infty$ together with the continuous mapping theorem implies that $1 - F(\bm S_n) \leadsto 1 - F(\bm S) \sim \mathrm{Uniform}(0,1)$ as $n \to \infty$. Combining Assertion~$(e)$ in Lemma~\ref{lem:uncond:cond} with~\eqref{eq:unif:SntoS}, we additionally immediately obtain that $F_n(\bm S_n) - F(\bm S_n)$ converges to zero in probability as $n \to \infty$, which implies that $p_n$ has the same weak limit as $1 - F(\bm S_n)$ as $n \to \infty$. Similarly, Assertion~$(f)$ in Lemma~\ref{lem:uncond:cond} combined with~\eqref{eq:unif:SntoS} readily implies that $p_n^{\scs M_n}$ has the same limit distribution as $1 - F(\bm S_n)$ as $n \to \infty$.
\end{proof}

\section{Concluding remarks}

\begin{figure}[t!]
\begin{center}
\includegraphics*[width=0.9\linewidth]{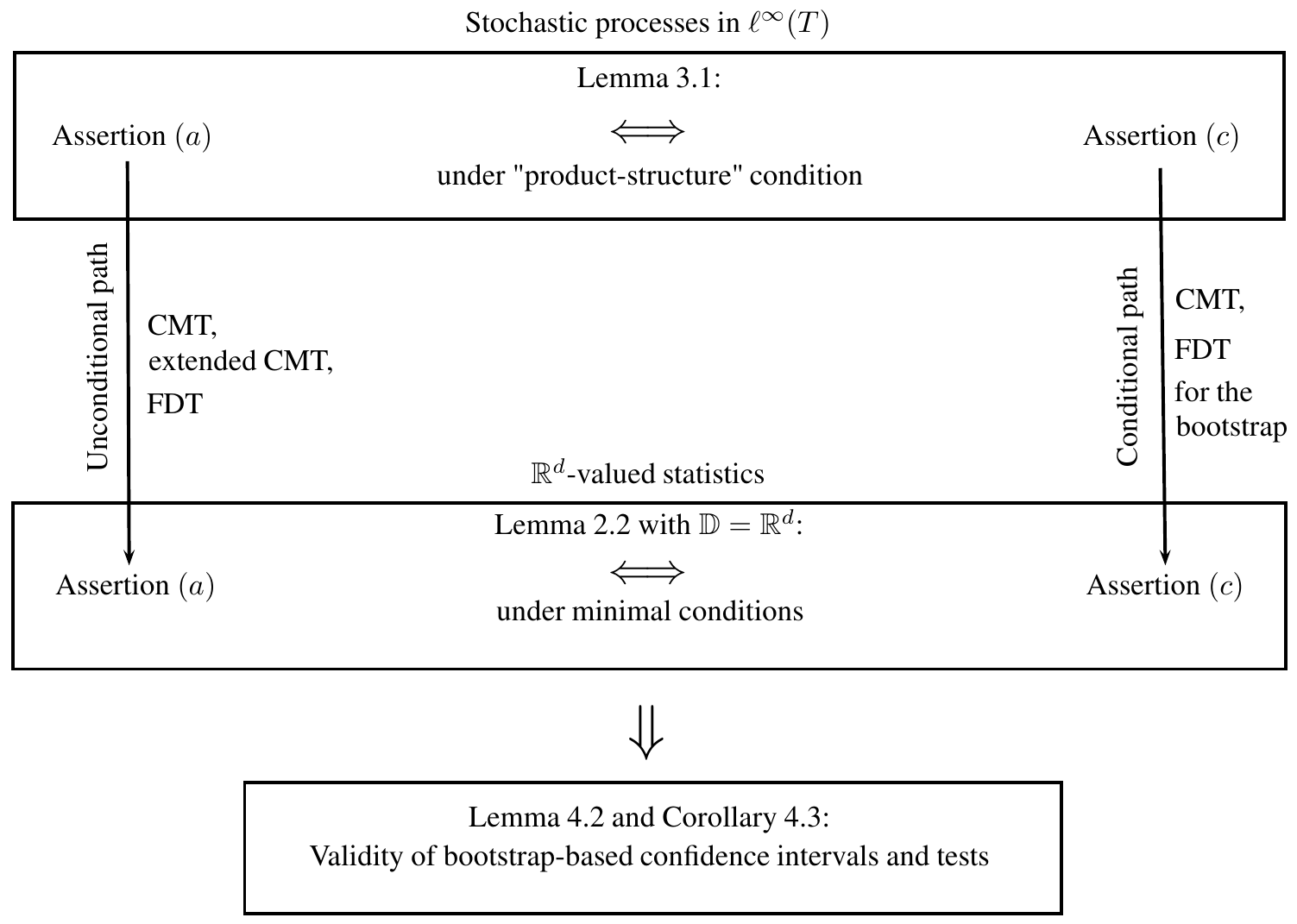}
\caption{\label{fig:summary} Summary of typical uses of the obtained results; CMT stands for ``continuous mapping theorem'' and FDT for ``functional delta method''.}
\end{center}
\end{figure}

As a picture often speaks better than words, we summarized in the diagram of Figure~\ref{fig:summary} the way the results obtained in this note could typically be used to prove the validity of bootstrap-based statistical inference procedures. From the point of view of applications of resampling schemes starting at the stochastic process level, the diagram highlights two paths to proving the asymptotic validity of bootstrap-based confidence intervals and tests: an unconditional path starting at Assertion~$(a)$ of Lemma~\ref{lem:proc} and a conditional path starting at Assertion~$(c)$ of Lemma~\ref{lem:proc}.

We conclude by summarizing the main consequences and features of the results obtained in this note, some of which explicitly appear in the diagram of Figure~\ref{fig:summary}: 

\begin{compactitem}
\item At the stochastic process level, it may be argued that one needs to deal with less subtle mathematical concepts to prove unconditional bootstrap consistency than to show its conditional version. Roughly speaking, the unconditional approach avoids the need to work with the seemingly awkward notion of ``conditional law'' of a non-measurable function. 

\item Focusing for instance on existing continuous mapping theorems \emph{for the bootstrap} \cite[][Section~10.1.4]{Kos08}, it appears that, for transferring Assertion~$(c)$ of Lemma~\ref{lem:proc} into Assertion~$(c)$ of Lemma~\ref{lem:uncond:cond}, more assumptions than just continuity of the underlying functional are necessary, thereby suggesting that the unconditional formulation of bootstrap consistency might be slightly more useful. Additionally, Assertion~$(a)$ of Lemma~\ref{lem:proc} can be combined with the \emph{extended continuous mapping theorem} \citep[][Theorem~1.11.1]{VanWel96}, while a version of the latter result for the bootstrap does not hitherto seem to~exist. 

\item The equivalence between the unconditional and the conditional formulation of bootstrap consistency at the stochastic process level only holds if the additional randomness in the bootstrap replicates is independent of the data (in fact, this assumption is only needed to make Assertion~$(c)$ well-defined). Interestingly enough, such a condition does not seem to be a restriction in practice as it seems satisfied by most if not all resampling schemes. 

\item Although, as already discussed, one cannot in general rely on Lemma~\ref{lem:uncond:cond} to deal with stochastic processes with bounded sample paths, this lemma remains general enough to deal with stochastic processes living in the Skorohod space (see, e.g., \citealp{Bil99}) since the latter can be metrized in such a way that it is separable and complete.

\end{compactitem}

\section*{Acknowledgments}

The authors are very grateful to an anonymous referee for making them aware of ``Hoeffding's trick'' \citep{Hoe52} and for several other very relevant suggestions which contributed to significantly {increasing} the scope of this note. The authors would also like to thank Jean-David Fermanian for fruitful discussions. This research has been supported by the Collaborative Research Center ``Statistical modeling of nonlinear dynamic processes'' (SFB 823) of the German Research Foundation, which is gratefully acknowledged. 

\bibliographystyle{chicago}
\bibliography{biblio}

\newpage
\thispagestyle{empty}

\begin{center}
{\LARGE Supplementary material for  \\ [2mm] ``A note on conditional versus joint unconditional weak \\ [2mm] convergence  in bootstrap consistency results''}
\vspace{1cm}

{\large Axel B\"ucher\footnote{Ruhr-Universit\"at Bochum,
Fakult\"at f\"ur Mathematik, 
Universit\"atsstr.~150, 44780 Bochum, Germany. 
{E-mail:} \texttt{axel.buecher@rub.de}} and Ivan Kojadinovic\footnote{CNRS / Universit\'e de Pau et des Pays de l'Adour, Laboratoire de math\'ematiques et applications -- IPRA, UMR 5142, B.P. 1155, 64013 Pau Cedex, France.
{E-mail:} \texttt{ivan.kojadinovic@univ-pau.fr}}

\vspace{.2cm} \today 
\vspace{1cm}
}

\end{center}

\begin{abstract}
This supplementary material contains the proofs of Lemmas~\ref{lem:bl1} and \ref{lem:bl1c}.
\end{abstract}

\appendix
\section{Proof of Lemma~\ref{lem:bl1}}

\begin{customlem}{\ref{lem:bl1}}
Suppose that $(\Db,d)$ is a separable metric space and let $P_\alpha$ be a net of probability measures on $(\Db,\mathcal D)$, where $\Dc$ denotes the Borel sigma field. Then $P_\alpha \leadsto P$ if and only if $d_\BL(P_\alpha,P) \to 0$. If $\Db=\R^d$ and if the d.f.\ of $P$ is continuous, we also have equivalence to $d_K(P_\alpha,P) \to 0$.
\end{customlem}

\begin{proof}
The assertion for $\Db=\R^d$ and if the d.f.\ of $P$ is continuous is, up to a slight generalization to nets, Lemma~2.11 in \cite{Van98}. The general case is, again up to a generalization to nets, Theorem 11.3.3 in \cite{Dud02}. Since we will need arguments from the latter proof in the proof of Lemma~\ref{lem:bl1c} below, we reproduce them for completeness.

We start by proving the claim under the additional assumption that $(\Db, d)$ is complete. By Ulam's Theorem (see, e.g., Theorem 1.3 in \citealp{Bil99}), the measure $P$ is tight. Hence, for any given $\eps>0$, we can find a compact set $K$ in $(\Db,d)$ such that $P(K) > 1-\eps$.  Let $K^\eps=\{ x \in \Db: d(x,y) < \eps \text{ for some } y \in K\}$ denote the $\eps$-enlargement of $K$. Furthermore, consider the function $g(x) = \max\{1-d(x,K) /\eps, 0\}$, $x \in \Db$, and note that $g$ is Lip\-schitz continuous and satisfies $\bm 1_K \le g \le \bm 1_{K^\eps}$. As a consequence of the Portmanteau Theorem (see, e.g., Theorem 1.3.4 in \citealp{VanWel96}), we obtain that
\[
\liminf P_\alpha(K^\eps) 
\ge 
\liminf \int g dP_\alpha 
\ge 
\int gdP \ge P(K) >1-\eps.
\]

Next, consider the space $\Fc_K$ of functions $h=f|_K:K \to [-1,1]$ with $f\in \BL_1(\Db)$. 
It can be verified that $\Fc_K$ is equicontinuous and bounded, whence, by the Arzel\'a--Ascoli theorem, $\Fc_K$ is relatively compact in the Banach space $(C(K), \| \cdot \|_\infty)$. Relative compactness in a complete metric space is equivalent to total boundedness, so we can find functions $f_1, \dots, f_m \in \Fc_K$ such that, for any $f\in \Fc_K$, there exists $f_j$ with $\sup_{x\in K} |f(x) - f_j(x)| < \eps$.  As a consequence, by Lipschitz continuity of $f$ and $f_j$ on $\Db$, we have
\[
\sup_{x\in K^\eps} |f(x) - f_j(x)| < 3\eps.
\]
Finally, assembling bounds obtained so far, we obtain that, for any $f\in\BL_1(\Db)$,
\begin{align*}
\big| \textstyle \int f d(P_\alpha -P) \big| 
&=
\big| \textstyle \int (f-f_j) d(P_\alpha -P) +  \int f_j d(P_\alpha -P)\big| \\
&\le 
\textstyle \int \big|f-f_j\big| dP_\alpha +  \int \big|f-f_j\big| dP  +  \big|\int f_j d(P_\alpha -P)\big| \\
&\le 
\textstyle \int_{K^\eps} \big|f-f_j\big| dP_\alpha + 2 P_\alpha \{ (K^\eps)^c \} + \int_{K} \big|f-f_j\big| dP + 2 P(K^c)  +  \big|\int f_j d(P_\alpha -P)\big| \\
&\le 
3 \eps + 2 \eps  + \eps + 2 \eps  + \max_{j=1}^m\big| \textstyle \int f_j d(P_\alpha -P)\big|
\end{align*}
for sufficiently large $\alpha$. The Portmanteau Theorem implies that that maximum on the right-hand side converges to 0. The assertion follows since $\eps>0$ was arbitrary.

Finally, the case where $\Db$ is not complete can be treated analogously by passing to the completion of $\Db$; see, e.g., \cite{Dud02}.
\end{proof}

\section{Proof of Lemma~\ref{lem:bl1c}}

\begin{customlem}{\ref{lem:bl1c}}
Suppose that $(\Db,d)$ is a  separable metric space and let $(\hat P_\alpha)_\alpha$ denote a net of random probability measures on $(\Db,\Dc)$ defined on a probability space $(\Omega, \Ac, \Pr)$. Then, 
\begin{align} \label{eq:rw2}
\int fd\hat P_\alpha \p \int f dP 
\end{align}
for any $f$ bounded and Lipschitz continuous if and only if $d_\BL(\hat P_\alpha,P) \to 0$ in probability. Further, $d_\BL(\hat P_\alpha,P)$, considered as a map from $\Omega$ to $\R$, is measurable.

If $\Db=\R^d$ and if the d.f.\ of $P$ is continuous, then \eqref{eq:rw2} is also equivalent to $d_K(\hat P_\alpha,P) \to 0$ in probability, and $d_K(\hat P_\alpha,P)$ is measurable as well.
\end{customlem}

\begin{proof}
We only consider the case where $\Db$ is complete. For the general case, one can pass to the completion of $\Db$ as mentioned in the proof of Lemma~\ref{lem:bl1}. Sufficiency follows from linearity of integrals, since any bounded Lipschitz function can be scaled to a function in $\BL_1(\Db)$. Necessity  follows by carefully following the  proof of Lemma~\ref{lem:bl1}, which only made use of the fact that $P_\alpha \leadsto P$ implies $\int fdP_\alpha \to \int fdP$ for all $f$ bounded and Lipschitz continuous. More precisely, let $\delta>0$ and $\eta>0$. We need to show that $\Pr( d_\BL(\hat P_\alpha,P)>\delta) < \eta$ for all sufficiently large $\alpha$. Let $\eps=\delta/11$. Choose $K=K(\eps)$ and, subsequently, the functions $g, f_1, \dots, f_m$ as in the proof of Lemma~\ref{lem:bl1}. By assumption, we have $\Pr( A_\alpha )  < \eta$ for all sufficiently large $\alpha$, where
\[
A_\alpha =\Big\{ \textstyle \max\big\{ \big|\int g d(\hat P_\alpha-P)\big|,\big|\int f_1 d(\hat P_\alpha-P)\big|, \dots, \big|\int f_m d(\hat P_\alpha-P)\big| \big\}  > \eps\Big\}. 
\]
On the event $A_\alpha^c$, we can verify that $\hat P_\alpha(K^\eps) > 1 - 2 \eps$ and subsequently follow the proof of Lemma~\ref{lem:bl1} to establish that $\sup_{f\in \BL_1} \big| \int f d(\hat P_\alpha-P) \big| \le 11\eps=\delta$ for sufficiently large $\alpha$, which implies the assertion.

Let us next prove measurability of $d_\BL(\hat P_\alpha,P)$. Choose a countable dense subset $S$ of $\Db$.
Let $\Hc$ denote the set of all real-valued functions on $\Db$ of the form
\[
h(x) = q \max\{ 1-pd(x,s), 0 \}, \quad x \in \Db,
\]
where $s\in S$, $p, q \in \Q \cap [0, \infty)$ with $q\le 1$ and $pq \le 1$. Note that that $\Hc$ is a countable subset of $\BL_1(\Db)$, and that any nonnegative function  $f\in\BL_1(\Db)$ can be written as
\begin{align}  \label{eq:suph}
f(x) = \sup\{ h(x): h \le f, h \in \Hc\}.
\end{align}
Indeed, either a picture helps, or the following formal argument: for fixed $x\in \Db$ with $c=f(x)>0$, choose a sequence $(s_m)_m$ in $S$ converging to $x$ and let $\delta_m = d(x,s_m)$. Without loss of generality, we may assume that $\delta_m < c$ for all $m$. Choose $c_m \in \Q$ such that $0 < c_m < c-\delta_m$ and such that $c_m \to c$. Consider the function $h_m(z) = c_m \{ 1-c_m^{-1} d(z,s_m)\}^+$. Then $h_m\in \Hc$ and  $h_m(z) \le f(z)$ for all $z \in \Db$. Indeed, this bound is trivial for $z$ with $d(z,s_m) \ge c_m$, and otherwise, we have
\[
c-f(z) = f(x) - f(z) \le d(x,z) \le d(x,s_m) + d(s_m,z) = \delta_m + d(s_m,z),
\]
which implies that 
\[
f(z) \ge c - \delta_m - d(s_m,z) > c_m - d(s_m,z)  =  h_m(z).
\]
The assertion in \eqref{eq:suph} then follows from the fact that $h_m(x) = c_m - \delta_m \to c =f(x)$ as $m\to\infty$.

Next, let $\Hc_\vee$ denote the set of functions which are maxima of a finite number of functions in $\Hc$. Note that $\Hc_\vee \subset \BL_1(\Db)$. Further, let $\Hc_{\vee, -}$ denote the  the set of functions $h=h_1 - h_2$ with $h_1, h_2 \in \Hc_{\vee}$ such that $h\in \BL_1(\Db)$. Note that $\Hc_{\vee, -}$ is still countable and a subset of $\BL_1(\Db)$. 

Now, consider an arbitrary function $f\in\BL_1(\Db)$, and write $f=f^+ - f^-$ with $f^+ = \max(f,0)$ and  $f^- = \max(-f,0)$. The supremum in \eqref{eq:suph}, with $f$ replaced by $f^+$, is over countably many functions. Denote them by $h_1^+, h_2^+, \dots$ For $m\in \N$, let $g_m^+= \max_{j=1}^m h_j^+ \in \Hc_\vee$, such that $f^+= \lim_{m\to\infty} g_m^+$. Similarly, we can write $f^-$ as a limit of functions $g_m^- \in \Hc_\vee$. Note that $g_m^\pm$ can only be positive on the support of $f^\pm$. As a consequence, $g_m = g_m^+ - g_m^-$ is an element of $\Hc_{\vee, -}$. Indeed, if $x$ and $y$ are both in the support of $f^+$, then $|g_m(x) - g_m(y)| = |g_m^+(x) - g_m^+(y)| \le d(x,y)$. The  case where both are in the support of $f^-$  or where at least one of the points is in neither support is similar.  Finally, consider the case where $x$ is in the support of $f^+$ and $y$ is in the support of $f^-$, and without loss of generality assume that $g_m(x)>g_m(y)$. Then, $|g_m(x) - g_m(y)| = g_m(x) - g_m(y) = g_m^+(x) + g_m^-(y) \le f^+(x) + f^-(y) = f(x) - f (y) \le d(x,y)$.

This implies, by monotone convergence,
\[
\textstyle \big|\int f d(\hat P_\alpha - dP)\big| 
= 
\lim_{m\to\infty}  \big|\int g_m d(\hat P_\alpha - dP)\big| 
\le 
\sup_{g \in \Hc_{\vee, -}} \big|\int g d(\hat P_\alpha - dP)\big|.
\]
As a consequence, since $f$ was arbitrary and since $\Hc_{\vee, -} \subset \BL_1(\Db)$, we obtain that 
\[
d_{\BL}(\hat P_\alpha, P) = \textstyle \sup_{g \in \Hc_{\vee,-}} \big|\int g d(\hat P_\alpha - dP)\big|,
\] 
and the assertion follows since the supremum on the right-hand side is a countable supremum over measurable random variables. 

Finally, consider the assertions regarding the Kolmogorov distance. Note that $\hat P_\alpha((-\infty,\bm x])$, considered as map from $\Omega$ to $\R$ with $\bm x\in \R^d$ fixed, is measurable. Hence, by right-continuity of d.f.s, the Kolmogorov distance is measurable as well. To conclude, it is sufficient to show that the convergence in \eqref{eq:rw2} is equivalent to 
\begin{align*} 
\hat P_\alpha((-\infty,\bm x]) \p P((-\infty, \bm x])
\end{align*}
for all $\bm x\in \R^d$; see, e.g, Problem~23.1 in \cite{Van98}.  This equivalence in turn follows by standard approximation arguments as in the proof of the classical Portmanteau Theorem; see, e.g., \cite{Van98}, Lemma~2.2. 
%
\end{proof}

\end{document}